\documentclass{amsart}

\usepackage{amssymb}
\usepackage{amsmath}
\usepackage{cite}
\usepackage{algorithmic}
\usepackage{bbm}
\usepackage{tikz}

\title[A construction of multiplicity class from Hesselink stratification]{A construction of multiplicity class of hypersurfaces from Hesselink stratification of a Hilbert scheme}
\author{Cheolgyu Lee}
\address[CL]{School of Mathematics, Korea Institute for Advanced Study(KIAS), 85 Hoegiro Dongdaemun-gu, Seoul, Republic of Korea}

\email{ghost279.math@gmail.com}
\thanks{2010 Mathematics Subject Classification. Primary 14L24. This work had been supported by a KIAS individual grant(6G067904) at Korea Institute for Advanced Study.}
\date{\today}

\newtheorem{theorem}{Theorem}[section]
\newtheorem{lemma}[theorem]{Lemma}

\newtheorem{corollary}[theorem]{Corollary}

\newenvironment{example}[1][Example]{\begin{trivlist}
\item[\hskip \labelsep {\bfseries #1}]}{\end{trivlist}}

\begin{document}
\begin{abstract}
It is well-known that there is a positive relationship between the maximal multiplicity and the length of the associated virtual 1-parameter subgroup of a projective hypersurface. In this paper, we will define the multiplicity classes of hypersurfaces and construct them from the Hesselink stratification of a Hilbert scheme.
\end{abstract}
\maketitle
\section{Introduction}
 In this paper, every scheme is over an algebraically closed field $k$. It is well-known that singularities of a semi-stable projective hypersurfaces are restricted \cite[p. 80]{GIT}. We can measure the magnitude of instability and singularity of a hypersurface and then compare them as in \cite{hesselsing}.  Consider a Hilbert scheme $\textup{Hilb}^{P_{r, d}}(\mathbb{P}_{k}^{r})$ of hypersurfaces \cite[p. 6]{Nitin} and Pl\"ucker embedding
\begin{equation}
\label{Plucker}
\textup{Hilb}^{P_{r, d}}(\mathbb{P}_{k}^{r})\xrightarrow{\sim} \mathbb{P}\left(\textup{H}^{0}\left(\mathbb{P}_{k}^{r}, \mathcal{O}_{\mathbb{P}_{k}^{r}}(d)\right)\right)
\end{equation} 
where 
\begin{displaymath}
P_{r, d}(x)=\binom{r+x}{r}-\binom{r+x-d}{r}
\end{displaymath}
for some $r, d\in\mathbb{N}$. There is the Hesselink stratification 
\begin{equation}
\label{stratification}
\textup{Hilb}^{P_{r, d}}(\mathbb{P}_{k}^{r})^{\textup{us}}=\coprod_{[\lambda], \delta}E_{[\lambda], \delta}^{d, r}
\end{equation}
 of the chosen Hilbert scheme $\textup{Hilb}^{P_{r, d}}(\mathbb{P}_{k}^{r})$ with respect to the canonical action of $\textup{SL}_{r+1}(k)$ and Pl\"ucker coordinate \eqref{Plucker}. In \cite[Theorem 3.1]{hesselsing}, it was shown that
\begin{equation}
\label{prevresult}
\frac{\Vert\lambda\Vert\delta-ad}{b-a}\leq \max_{p\in H_{q}}\textup{mult}_{p}H_{q}\leq\frac{rd}{r+1}-\delta\frac{a}{\Vert\lambda\Vert}
\end{equation}
if $H_{q}$ is represented by $q\in E_{[\lambda], \delta}^{d, r}$ for some 1-parameter subgroup $\lambda$ of $\textup{SL}_{r+1}(k)$ satisfying
\begin{displaymath}
\lambda(t)=\textup{diag}(t^{a_{0}}, t^{a_{1}}, \ldots, t^{a_{r}})\in\textup{SL}_{r+1}(k) \textrm{ for all }t\in k^{\times},
\end{displaymath}
$a=\min_{0\leq i\leq r}a_{i}$ and $b=\max_{0\leq i\leq r}a_{i}$ for some $\{a_{i}\}_{i=0}^{r}\in \mathbb{Z}^{r+1}$. Inequality \eqref{prevresult} determines the maximal multiplicity of hypersurface $H_{q}$ if the difference between two bounds in \eqref{prevresult} is less than $1$. Otherwise, \eqref{prevresult} cannot determine the maximal multiplicity. Also, \eqref{prevresult} cannot be used to distinguish the maximal multiplicities between two semi-stable hypersurfaces.

Is there a way to construct an arbitrary multiplicity class
\begin{displaymath}
S_{r, d, m}=\big\{q\in\textup{Hilb}^{P_{r, d}}(\mathbb{P}_{k}^{r})\big\vert \textup{mult}_{p} H_{q}=m\textrm{ for some }p\in H_{q}\big\}
\end{displaymath}
of hypersurfaces from Hesselink stratification in \eqref{stratification}? By \cite[Lemma 4.2]{hesselsing}, there is a coordinate $g\in\textup{SL}_{r+1}(k)$ such that both maximal multiplicity and instability of a hypersurface $H_{q}$ can be determined by the state polytope $\Delta_{g.H_{q}}$. Maximal multiplicity of $H_{q}$ is determined by a supporting hyperplane of $\Delta_{g.H_{q}}$ and instability of $H_{q}$ is determined by a sphere centered at the barycenter and tangent to $\Delta_{g.H_{q}}$. Comparing singularity and instability can be considered as comparing certain supporting hyperplanes and spheres, as we will see in figure \ref{example3}. If we increase the radius of the sphere, the sphere indicating instability looks like the hyperplane indicating maximal multiplicity around our state polytope.

To apply this simple idea, we will use the pull-back of the Hesselink stratification of $\textup{Hilb}^{P_{r, d+rN}}(\mathbb{P}_{k}^{r})^{\textup{us}}$ via the closed immersion
\begin{displaymath}
\phi_{r, d, N}:\textup{Hilb}^{P_{r, d}}(\mathbb{P}_{k}^{r})\rightarrow\textup{Hilb}^{P_{r, d+rN}}(\mathbb{P}_{k}^{r})
\end{displaymath}
given by a monomial multiplication. Such a map is {\it not} $\textup{SL}_{r+1}(k)$-equivariant, so we can map every point in $\textup{Hilb}^{P_{r, d}}(\mathbb{P}_{k}^{r})$ into the unstable locus of $\textup{Hilb}^{P_{r, d+rN}}(\mathbb{P}_{k}^{r})$ to deal with all hypersurfaces parametrized by $\textup{Hilb}^{P_{r, d}}(\mathbb{P}_{k}^{r})$. By taking $N\rightarrow \infty$, we can amplify the multiplicy of $H_{\phi_{r, d, N}(q)}$ at a fixed point $p\in H_{q}$, which will become a unique point of $H_{\phi_{r, d, N}(q)}$ attaining maximal multiplicity. Using \cite[Lemma 4.2]{hesselsing}, we can compare the Hesselink stratum containing $\phi_{r, d, N}(q)$ and the maximal multiplicity of $H_{\phi_{r, d, N}(q)}$, which can be directly computed from the multiplicity of $H_{q}$ at $p$.
In this paper (Corollary~\ref{last}), we will prove that $S_{r, d, m}$ can be constructed using the Hesselink stratification of another Hilbert scheme $\textup{Hilb}^{P_{r, d+rN}}(\mathbb{P}_{k}^{r})$ for sufficiently large $N$ using such an idea. 
\section{Preliminaries}
\subsection{Numerical criterion for semi-stability}
Suppose that $G=\textup{SL}_{r+1}(k)$ linearly acts on a vector space $V$. Then there is a $G$ action on $\mathbb{P}(V)$ satisfying $g.[v]=[g.v]$ for all $g\in G$ and $v\in V$. For any 1-parameter subgroup $\lambda\in\Gamma(G)$ of $G$, we have the weight decomposition \cite[Proposition 4.14]{Mukai}
\begin{displaymath}
V=\bigoplus_{m\in \mathbb{Z}}V_{m}
\end{displaymath}
where
\begin{displaymath}
V_{m}=\{v\in V| \lambda(t).v=t^{m}v \textup{ for all }t\in k^{\times}\}.
\end{displaymath}
Consequently, we may express an arbitrary $v\in V$ as the sum of eigenvectors $v=\sum_{m\in\mathbb{Z}}v_{m}$ where $v_{m}\in V_{m}$ for all $m\in\mathbb{Z}$. Let's define a function $\mu:\mathbb{P}(V)\times \Gamma(G)\rightarrow \mathbb{Z}$ as follows:
\begin{displaymath}
\mu([v], \lambda)=\min\{m\in\mathbb{Z}|v_{m}\neq 0\}.
\end{displaymath}
$[v]\in \mathbb{P}(V)$ is semi-stable if there is an invariant polynomial $f\in k[V]^{G}$ such that $f(v)\neq 0$. Now we are ready to state the numerical criterion for semi-stability \cite[Theorem 2.1 in p. 49]{GIT} in the above case.
\begin{theorem}
If $G$ linearly acts on $V$, then $x\in\mathbb{P}(V)$ is semi-stable if and only if
\begin{displaymath}
\mu(x,\lambda)\leq 0
\end{displaymath}
for all $\lambda\in\Gamma(G)$.
\end{theorem}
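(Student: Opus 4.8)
The plan is to prove the two implications separately and to concentrate all the real content in the ``if'' direction. For the ``only if'' direction, suppose $x=[v]$ is semi-stable; since semi-stability of a point of $\mathbb{P}(V)$ is governed by the homogeneous invariants of positive degree, we may choose $f\in k[V]^{G}$ homogeneous of some degree $e>0$ with $f(v)\neq 0$, so that in particular $f(0)=0$. If some $\lambda\in\Gamma(G)$ had $\mu(x,\lambda)>0$, then every weight $m$ with $v_{m}\neq 0$ would satisfy $m\geq\mu(x,\lambda)>0$, whence $\lambda(t).v=\sum_{m}t^{m}v_{m}\to 0$ as $t\to 0$. Invariance gives $f(v)=f(\lambda(t).v)$ for all $t\in k^{\times}$, and passing to the limit yields $f(v)=f(0)=0$, a contradiction. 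Hence $\mu(x,\lambda)\leq 0$ for all $\lambda\in\Gamma(G)$.

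For the converse I would first reduce to an orbit-closure statement. Since $G=\textup{SL}_{r+1}(k)$ is reductive, $k[V]^{G}$ is finitely generated and separates disjoint closed $G$-invariant subsets of $V$; applying this to the invariant closed sets $\{0\}$ and $\overline{G.v}$ shows that $x$ is semi-stable if and only if $0\notin\overline{G.v}$. It therefore suffices to prove: if $\mu(x,\lambda)\leq 0$ for all $\lambda\in\Gamma(G)$, then $0\notin\overline{G.v}$. I would prove the contrapositive. Assuming $0\in\overline{G.v}$, the Hilbert--Mumford lemma produces $\lambda\in\Gamma(G)$ with $\lim_{t\to 0}\lambda(t).v=0$; for such a $\lambda$ no weight $m\leq 0$ can occur in $v$, so $\mu(x,\lambda)>0$, contradicting the hypothesis.

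The crux is the Hilbert--Mumford lemma, i.e. the implication that if $0\in\overline{G.v}$ then the origin is already reached along an honest one-parameter subgroup. The input is that $0\in\overline{G.v}$ yields, via the curve selection lemma (equivalently, the valuative criterion applied to the orbit map $G\to G.v$), a morphism $\gamma:\textup{Spec}\,k((t))\to G$ for which $\gamma(t).v$ extends over $\textup{Spec}\,k[[t]]$ with value $0$ at the closed point. One then upgrades $\gamma$ to a cocharacter by means of the Cartan (Iwahori) decomposition $G(k((t)))=G(k[[t]])\,T(k((t)))\,G(k[[t]])$ with $T$ a maximal torus: the two $G(k[[t]])$ factors do not affect the existence of the limit, so the middle $T(k((t)))$ part --- an honest one-parameter subgroup of $T$ after a finite base change in $t$ --- already sends $v$ to $0$. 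Making this precise (following Mumford's original argument) is the only nontrivial step; alternatively, since \cite[Theorem 2.1 in p.\ 49]{GIT} is exactly the assertion in question, one may simply invoke it, the two implications above being recorded here for the reader's convenience.
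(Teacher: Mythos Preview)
The paper does not actually prove this statement; it simply records it as the numerical criterion and cites \cite[Theorem 2.1 in p.\ 49]{GIT}. Your sketch is correct and is essentially the standard argument behind that reference: the easy direction via an invariant polynomial and the limit $\lambda(t).v\to 0$, and the hard direction via the equivalence of semi-stability with $0\notin\overline{G.v}$ together with the Hilbert--Mumford lemma proved through the Cartan/Iwahori decomposition of $G(k((t)))$. So you have supplied strictly more than the paper does, and what you supplied matches the approach of the cited source; there is nothing further to compare.
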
 

\subsection{Hesselink stratification of a Hilbert scheme}
We can see that $G$ acts on $\Gamma(G)$ via conjugation. Let
\begin{displaymath}
(g\star\lambda) (t)=g\lambda(t)g^{-1}
\end{displaymath}
for all $g\in G$ and $t\in k^{\times}$. Note that $g\star\lambda\in \Gamma(G)$ so that $\star$ defines a $G$ action on $\Gamma(G)$. $\mu$ is invariant under the action of $G$ on $\mathbb{P}(V)\times\Gamma(G)$ \cite[p. 49]{GIT}; that is,
\begin{equation}
\mu(g.x, g\star\lambda)=\mu(x, \lambda)
\end{equation}
for all $g\in G$, $x\in\mathbb{P}(V)$ and $\lambda\in\Gamma(G)$. For an arbitrary $\lambda\in\Gamma(G)$ and $n\in\mathbb{N}$, we can define $n\lambda\in\Gamma(G)$ as follows:
\begin{displaymath}
(n\lambda)(t)=\lambda(t^{n}), \quad\forall t\in k^{\times}.
\end{displaymath}

Our $\mu(x, \lambda)$ measures how much $x\in\mathbb{P}(V)$ is unstable under the action of the image of $\lambda$. To measure the magnitude of instability of $x\in\mathbb{P}(V)$ under the action of $G$, we may normalize $\mu$ by some norm $\Vert\cdot\Vert:\Gamma(G)\rightarrow\mathbb{R}_{\geq 0}$ satisfying
\begin{itemize}
\item $\Vert n\lambda\Vert=n\Vert \lambda\Vert$ for all $n\in\mathbb{N}$ and $\lambda\in\Gamma(G)$,
\item $\Vert g\star\lambda\Vert=\Vert\lambda\Vert$ for all $g\in G$ and $\lambda\in\Gamma(G)$ 
\end{itemize}
because $\mu(x, n\lambda)=n\mu(x, \lambda)$ for all $n\in\mathbb{N}$ and $\lambda\in\Gamma(G)$. Furthermore, we may consider the value
\begin{equation}
\label{kempfindex}
\max_{\lambda\in\Gamma(G)}\frac{\mu(x, \lambda)}{\Vert\lambda\Vert}
\end{equation}
as the magnitude of the instability of $x\in\mathbb{P}(V)$. Let us state a theorem on the existence of \eqref{kempfindex}, which had been proven in \cite{Kempf}.
\begin{theorem}[Kempf, \cite{Kempf}]
\label{Kempftheorem}
Suppose that the norm $\Vert\cdot\Vert$ satisfies the above conditions and there is a maximal torus $T$ of $G$ and integral-valued bilinear form $\langle\cdot, \cdot\rangle$ on the lattice $\Gamma(T)$ such that $\langle\lambda, \lambda\rangle=\Vert\lambda\Vert^{2}$ for all $\lambda\in\Gamma(T)$ . If $x\in\mathbb{P}(V)$ is an unstable point, then there is $\sigma\in\Gamma(G)$ and a parabolic subgroup $G_{x}$ of $G$ satisfying
\begin{displaymath}
\frac{\mu(x, \sigma)}{\Vert\sigma\Vert}=\max_{\lambda\in\Gamma(G)}\frac{\mu(x, \lambda)}{\Vert\lambda\Vert}
\end{displaymath}
and
\begin{displaymath}
G_{x}=\pi(\sigma)=\big\lbrace g\in G\big\vert \exists\lim_{t\rightarrow 0} \sigma(t)g\sigma(t^{-1})\in G \big\rbrace
\end{displaymath}
such that $G_{x}$ acts transitively on the set
\begin{displaymath}
\Lambda_{x}=\bigg\lbrace\rho\in\Gamma(G)\bigg \vert \frac{\mu(x, \rho)}{\Vert\rho\Vert}=\max_{\lambda\in\Gamma(G)}\frac{\mu(x, \lambda)}{\Vert\lambda\Vert} \bigg\rbrace
\end{displaymath}
via the conjugation action given by $\star$. Furthermore, $G_{x}=\pi(\rho)$ for all $\rho\in \Lambda_{x}$.
\end{theorem}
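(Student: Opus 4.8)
The plan is to reduce the maximisation over all of $\Gamma(G)$ to one over each maximal torus, to deduce the existence of the maximum from a finiteness property of weights, and finally to obtain the transitivity of $\pi(\sigma)$ from a uniqueness statement inside a single torus together with the structure theory of parabolic subgroups. To begin, fix a maximal torus $T\le G$. Every $\lambda\in\Gamma(G)$ lies in some maximal torus, hence equals $g\star\lambda'$ for some $g\in G$ and $\lambda'\in\Gamma(T)$; because $\mu(g^{-1}.x,\lambda')=\mu(x,g\star\lambda')$ and $\Vert g\star\lambda'\Vert=\Vert\lambda'\Vert$, one gets
\[
\max_{\lambda\in\Gamma(G)}\frac{\mu(x,\lambda)}{\Vert\lambda\Vert}=\sup_{g\in G}B_{T}(g^{-1}.x),\qquad\text{where }B_{T}(y):=\sup_{0\ne\lambda'\in\Gamma(T)}\frac{\mu(y,\lambda')}{\Vert\lambda'\Vert}.
\]

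Next I would analyse $B_{T}(y)$ for a fixed $y=[w]\in\mathbb{P}(V)$. Writing $w=\sum_{m}w_{m}$ for the $T$-weight decomposition and letting $\Xi_{T}(y)\subset\Gamma(T)\otimes\mathbb{R}$ be the finite set of occurring weights (viewed as points via the form $\langle\cdot,\cdot\rangle$), the function $\mu(y,-)$ extends to the concave, positively homogeneous, piecewise-linear function $\lambda'\mapsto\min_{\chi\in\Xi_{T}(y)}\langle\chi,\lambda'\rangle$. A standard minimax argument, exchanging the minimum over the polytope $\mathrm{Conv}\,\Xi_{T}(y)$ with the maximum over the unit ball of $\Gamma(T)\otimes\mathbb{R}$, identifies $B_{T}(y)$ with the Euclidean distance from the origin to $\mathrm{Conv}\,\Xi_{T}(y)$; moreover the supremum, when positive, is attained exactly along the ray spanned by the nearest point $q_{0}$, and since $\mathrm{Conv}\,\Xi_{T}(y)$ is a rational polytope and $\langle\cdot,\cdot\rangle$ is rational, $q_{0}$ and hence this ray is rational, so it is spanned by a unique primitive $\sigma_{T}(y)\in\Gamma(T)$. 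Strict convexity of $\Vert\cdot\Vert^{2}$ gives the uniqueness of this optimal ray inside $\Gamma(T)$.

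The existence of the maximum now follows. As $g$ ranges over $G$, the weight set $\Xi_{T}(g^{-1}.x)$ is always a subset of the finite set of $T$-weights of $V$, so it takes only finitely many values; hence $B_{T}(g^{-1}.x)=\mathrm{dist}\big(0,\mathrm{Conv}\,\Xi_{T}(g^{-1}.x)\big)$ takes only finitely many values, and the supremum above is attained, say at $g_{0}$. Since $x$ is unstable, the numerical criterion gives some $\lambda$ with $\mu(x,\lambda)>0$, so this maximum is positive and is realised by $\sigma:=g_{0}\star\sigma_{T}(g_{0}^{-1}.x)$, which proves the first assertion.

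It remains to prove that $\pi(\sigma)$ acts transitively on $\Lambda_{x}$, and this is the step I expect to be genuinely hard. One inclusion is easy: if $u$ lies in the unipotent radical of $\pi(\sigma)$ then $\lim_{t\to0}\sigma(t)u\sigma(t)^{-1}=e$, so $\lim_{t\to0}\sigma(t).(u^{-1}.x)=\lim_{t\to0}\sigma(t).x$ and therefore $\mu(x,u\star\sigma)=\mu(u^{-1}.x,\sigma)=\mu(x,\sigma)$; combined with the fact that the Levi $Z_{G}(\sigma)$ fixes $\sigma$ under $\star$, this yields $\pi(\sigma)\star\sigma\subseteq\Lambda_{x}$, an orbit on which $\pi(\sigma)$ acts transitively. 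The real content is the reverse inclusion: an arbitrary $\rho\in\Lambda_{x}$ must be $\pi(\sigma)$-conjugate to $\sigma$. My plan here would be Kempf's: using the concavity of $\mu(x,-)$ along the geodesics of the spherical Tits building of $G$ — which on each apartment is exactly the concavity established in the torus computation — show that no optimal one-parameter subgroup can leave the parabolic $\pi(\sigma)$, so that $\rho$ lies in some maximal torus of $\pi(\sigma)$; then, since all maximal tori of a parabolic are conjugate by its unipotent radical, conjugate $\rho$ by such an element into a maximal torus containing $\sigma$, at which point the uniqueness of the optimal ray inside that torus forces $\rho$ onto the corresponding conjugate of $\sigma$. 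The principal obstacle is exactly this building-and-parabolic argument controlling optimal one-parameter subgroups lying in different maximal tori; everything else is convex geometry and elementary group theory.
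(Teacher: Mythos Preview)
The paper does not supply a proof of this theorem: it is stated with the attribution ``Kempf, \cite{Kempf}'' and introduced by the sentence ``Let us state a theorem on the existence of \eqref{kempfindex}, which had been proven in \cite{Kempf},'' with no argument given thereafter. There is therefore nothing in the paper to compare your proposal against; the author simply imports the result from Kempf's original paper.

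For what it is worth, your sketch is a faithful outline of Kempf's argument: the reduction to a single torus via conjugacy, the identification of the torus optimum with the nearest point of the rational weight polytope, the finiteness of possible weight sets over the $G$-orbit, and the building/parabolic argument for uniqueness up to $\pi(\sigma)$-conjugacy are exactly the ingredients in \cite{Kempf}. One small caveat worth flagging: as literally stated here, $\Lambda_{x}$ is closed under replacing $\rho$ by any positive integer multiple, while conjugation preserves $\Vert\cdot\Vert$, so a single $\pi(\sigma)$-orbit cannot exhaust $\Lambda_{x}$ unless one restricts to indivisible one-parameter subgroups (as Kempf does) or works modulo positive scaling; this is an imprecision in the statement rather than a defect in your strategy.
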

Let
\begin{displaymath}
E_{[\lambda], \delta}=\bigg\lbrace x\in\mathbb{P}(V)\bigg\vert\max_{\lambda\in\Gamma(G)}\frac{\mu(x, \lambda)}{\Vert\lambda\Vert}=\delta,\quad \Lambda_{x}\cap [\lambda]\neq\emptyset \bigg\rbrace
\end{displaymath}
where $[\lambda]$ is the conjugacy class of $\Gamma(G)$ containing $\lambda\in\Gamma(G)$ and $\delta\in\mathbb{R}_{>0}$. Each $E_{[\lambda], \delta}$ is a locally closed subset of $\mathbb{P}(V)$ as we can see in \cite{Hesselink}. 
From now on, let $V$ be $k[x_{0}, \ldots, x_{r}]_{d}=\textup{Sym}^{d}k[x_{0}, \ldots, x_{r}]_{1}$ and let the action be the $G$-action obtained by symmetrizing the $G$-action on $k[x_{0}, \ldots, x_{r}]_{1}$ given by the formula
\begin{displaymath}
g.x_{i}=\sum_{j=0}^{r}g_{ji}x_{j}, \textrm{ for all }g\in G\textrm{ and }0\leq i\leq r.
\end{displaymath}
We see that $\mathbb{P}(V)$ is isomorphic to the Hilbert scheme of hypersurfaces of dimension $r-1$ and degree $d$.

Let $T$ be the group of diagonal matrices in $G$. $T$ is a maximal torus of $G$. Let $X(T)$ be the group of characters defined on $T$ and 
\begin{displaymath}
\chi_{i}(t)=t_{ii}
\end{displaymath}
for all $t\in T$ and $i\in\{0, 1, \ldots, r\}$, where $t_{ii}$ is the $i$'th diagonal entry of $t$. We may embed $X(T)\otimes_{\mathbb{Z}}\mathbb{R}$ into $\mathbb{R}^{r+1}$ via map $\iota: X(T)\otimes_{\mathbb{Z}}\mathbb{R}\rightarrow \mathbb{R}^{r+1}$ satisfying 
\begin{displaymath}
\iota(\chi_{i})=\mathbf{e}_{i}-\frac{1}{r+1}\sum_{j=0}^{r}\mathbf{e}_{j},\quad\forall i\in\{0, 1, \ldots, r\}
\end{displaymath}
where $\mathbf{e}_{i}$ is the $i$'th elementary vector in $\mathbb{R}^{r+1}$. There is a perfect pairing $\langle\cdot, \cdot\rangle: X(T)\otimes_{\mathbb{Z}} \Gamma(T)\rightarrow\mathbb{Z}$ satisfying
\begin{displaymath}
\chi(\lambda(t))=t^{\langle\chi,\lambda\rangle}
\end{displaymath}
for all $\chi\in X(T)$, $\lambda\in\Gamma(T)$ and $t\in k^{\times}$. Consider the basis $\{\lambda_{i}\}_{i=0}^{r}$ of $\Gamma(T)$, which is dual to $\{\chi_{i}\}_{i=0}^{r}$ with respect to the pairing  $\langle\cdot, \cdot\rangle$. Considering the isomorphism $h:\Gamma(T)\rightarrow X(T)$ satisfying $h(\lambda_{i})=\chi_{i}$ for all $0\leq i\leq r$, we can define a norm $\Vert\cdot\Vert_{0}$ on $\Gamma(T)$ induced by the Euclidean norm $\vert\cdot\vert$ of $\mathbb{R}^{r+1}$. Such a norm is invariant under the conjugation action of the Weyl group of $T$ in $G$. Therefore, we can extend $\Vert\cdot\Vert_{0}$ to the norm $\Vert\cdot\Vert$ of $\Gamma(G)$ via conjugation. That is, for an arbitrary $\lambda\in\Gamma(G)$,
\begin{displaymath}
\Vert\lambda\Vert=\Vert g\star\lambda\Vert_{0}
\end{displaymath}
if $g\star\lambda\in\Gamma(T)$ for some $g\in G$. Such a $g$ always exists since all maximal tori of $G$ are conjugate. Note that $\Vert\cdot\Vert$ satisfies all the hypotheses in Thoerem~\ref{Kempftheorem}.

 Consequently, we have the Hesselink stratification of the Hilbert scheme
\begin{displaymath}
\textup{Hilb}^{P_{r, d}}(\mathbb{P}_{k}^{r})^{\textup{us}}=\coprod_{[\lambda], \delta}E_{[\lambda], \delta}^{d, r}
\end{displaymath}
where 
\begin{displaymath}
P_{r, d}(x)=\binom{r+x}{r}-\binom{r+x-d}{r}
\end{displaymath}
for each $d$ and $r$ in $\mathbb{N}$.
\subsection{State polytope and instability}
Let the state polytope\footnote[1]{According to the literature, such a definition of state polytope is obtained from the canonical $\textup{GL}_{r+1}(k)$-action with the canonical linearization twisted by a power of determinant while we are considering the canonical $\textup{SL}_{r+1}(k)$-action. However, such a set-up does not changes the situation of our problem in the viewpoint of GIT as we can see in \cite[2.2]{instability}. Also, such a definition of state polytope let us observe the symmetry within our problem directly from the picture.} $\Delta_{x}$ of a Hilbert point $x\in \textup{Hilb}^{P_{r, d}}(\mathbb{P}_{k}^{r})$ be the newton polytope of the defining equation of $x$, which is embedded in $\mathbb{R}^{r+1}$. Let $\vert\Delta_{x}\vert$ be the Euclidean distance from $\xi_{r, d}=\frac{d}{r+1}\mathbbm{1}$ to $\Delta_{x}$ where $\mathbbm{1}$ is the all-$1$ vector in $\mathbb{R}^{r+1}$. There is a unique 1-parameter subgroup $\lambda_{x}\in\Gamma(T)$ which is indivisible by any non-unit integer and
\begin{displaymath}
\iota \left(h(\lambda_{x})\otimes_{\mathbb{Z}} 1\right)
\end{displaymath}
is the distance vector from $\xi_{r, d}=\frac{d}{r+1}\mathbbm{1}$ to $\Delta_{x}$. It is well-known that we can measure the instability of an arbitrary $x\in \textup{Hilb}^{P_{r, d}}(\mathbb{P}_{k}^{r})$ using some conditions on state polytopes.
\begin{theorem}
\label{polytope}
$p\in \textup{Hilb}^{P_{r, d}}(\mathbb{P}_{k}^{r})$ is semi-stable if and only if $\xi_{r, d}\in\Delta_{g.p}$ for all $g\in G$. Furthermore, if $p\in \textup{Hilb}^{P_{r, d}}(\mathbb{P}_{k}^{r})$ is unstable, then there is $g_{m}\in G$ satisfying
\begin{displaymath}
\vert\Delta_{g_{m}.p}\vert = \max_{g\in G}\vert\Delta_{g.p}\vert
\end{displaymath}
and $p\in E_{[\lambda], \delta}^{d, r}$ if and only if $\delta=\vert\Delta_{g_{m}.p}\vert$ and $\lambda_{g_{m}.p} \in [\lambda]$.
\end{theorem}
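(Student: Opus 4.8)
The plan is to translate the weight function $\mu$ into linear–functional data on the Newton polytope and then to read off both assertions from elementary convex geometry together with the numerical criterion for semi-stability and Theorem~\ref{Kempftheorem}. First I would record the dictionary between $\mu$ and the state polytope. For $\lambda\in\Gamma(T)$ with $\lambda(t)=\textup{diag}(t^{w_{0}},\dots,t^{w_{r}})$ and $\sum_{i}w_{i}=0$, put $\mathbf{w}=\iota(h(\lambda))=\sum_{i}w_{i}\mathbf{e}_{i}\in\mathbb{R}^{r+1}$, so that $\Vert\lambda\Vert=\vert\mathbf{w}\vert$. From $g.x_{i}=\sum_{j}g_{ji}x_{j}$ one computes $\lambda(t).x^{\mathbf{a}}=t^{\langle\mathbf{a},\mathbf{w}\rangle}x^{\mathbf{a}}$ for a degree-$d$ monomial $x^{\mathbf{a}}$, hence for a form $f$ with Newton polytope $\Delta_{[f]}$ one has $\mu([f],\lambda)=\min_{\mathbf{a}\in\Delta_{[f]}}\langle\mathbf{a},\mathbf{w}\rangle$, attained at a vertex. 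Since $\langle\mathbf{a},\mathbbm{1}\rangle=d$ on $\Delta_{[f]}$ while $\langle\xi_{r,d},\mathbf{w}\rangle=\tfrac{d}{r+1}\langle\mathbbm{1},\mathbf{w}\rangle=0$, this equals $\min_{v\in\Delta_{[f]}}\langle v-\xi_{r,d},\mathbf{w}\rangle$; in particular $\mu([f],\lambda)>0$ exactly when the hyperplane through $\xi_{r,d}$ normal to $\mathbf{w}$ has $\Delta_{[f]}$ strictly on one side, i.e.\ exactly when $\xi_{r,d}\notin\Delta_{[f]}$ admits this as a strictly separating direction.

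For the first assertion I would use $G$-invariance of $\mu$, the decomposition $\Gamma(G)=\bigcup_{g}g\star\Gamma(T)$, and $\Vert g\star\lambda\Vert=\Vert\lambda\Vert$ to rephrase the numerical criterion as: $x$ is semi-stable iff $\mu(g.x,\lambda')\le 0$ for all $g\in G$ and all $\lambda'\in\Gamma(T)$. By the dictionary above this says $\max_{\lambda'\in\Gamma(T)}\mu(g.x,\lambda')\le 0$ for all $g$, and a separating-hyperplane argument inside the affine hyperplane $\{v:\langle v,\mathbbm{1}\rangle=d\}$ shows that this holds iff $\xi_{r,d}\in\Delta_{g.x}$; here one uses that $\Delta_{g.x}$ is a rational polytope and $\xi_{r,d}$ is rational, so a separating functional may be taken rational and (after projecting out the $\mathbbm{1}$-component) proportional to some $\lambda'\in\Gamma(T)$. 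Combining gives semi-stability $\iff$ $\xi_{r,d}\in\Delta_{g.x}$ for every $g\in G$.

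For the unstable case the same manipulation, together with the elementary fact that for a compact convex polytope $\Delta$ with $\xi\notin\Delta$ one has $\max_{\vert u\vert=1}\min_{v\in\Delta}\langle v-\xi,u\rangle=\textup{dist}(\xi,\Delta)$, attained exactly at the normalized distance vector $\mathbf{n}/\vert\mathbf{n}\vert$ (Cauchy--Schwarz for ``$\le$''; the supporting hyperplane at the nearest point for ``$\ge$'', for attainment, and for uniqueness of the maximizing direction), yields
\begin{displaymath}
\max_{\lambda\in\Gamma(G)}\frac{\mu(x,\lambda)}{\Vert\lambda\Vert}=\max_{g\in G}\ \max_{\lambda'\in\Gamma(T)}\frac{\mu(g.x,\lambda')}{\Vert\lambda'\Vert}=\max_{g\in G}\vert\Delta_{g.x}\vert ,
\end{displaymath}
where the second equality holds because $\max_{\lambda'\in\Gamma(T)}\mu(g.x,\lambda')/\Vert\lambda'\Vert=\vert\Delta_{g.x}\vert$ whenever $\xi_{r,d}\notin\Delta_{g.x}$ (and is $\le 0$ otherwise, while instability of $x$ guarantees some $g$ with $\xi_{r,d}\notin\Delta_{g.x}$); the maximizing direction there is rational since the nearest point of a rational polytope to a rational point is rational. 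Then Theorem~\ref{Kempftheorem} supplies $\sigma\in\Gamma(G)$ attaining the outer maximum; writing $\sigma=g_{m}^{-1}\star\sigma'$ with $\sigma'\in\Gamma(T)$ and using $G$-invariance shows $\mu(g_{m}.x,\sigma')/\Vert\sigma'\Vert=\delta$, whence $\vert\Delta_{g_{m}.x}\vert=\delta=\max_{g\in G}\vert\Delta_{g.x}\vert$, which is the displayed equation in the theorem.

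Finally, to identify the conjugacy class, note that $\mu(g_{m}.x,\lambda_{g_{m}.x})/\Vert\lambda_{g_{m}.x}\Vert=\vert\Delta_{g_{m}.x}\vert=\delta$ attains the maximum, so $\lambda_{g_{m}.x}\in\Lambda_{g_{m}.x}$ and therefore $g_{m}^{-1}\star\lambda_{g_{m}.x}\in\Lambda_{x}=g_{m}^{-1}\star\Lambda_{g_{m}.x}$. Since $\pi(\sigma)$ acts transitively on $\Lambda_{x}$, all elements of $\Lambda_{x}$ are $G$-conjugate, so $\Lambda_{x}$ lies in the single conjugacy class $[\lambda_{g_{m}.x}]$; hence $\Lambda_{x}\cap[\lambda]\ne\emptyset\iff\lambda_{g_{m}.x}\in[\lambda]$, which together with $\delta=\vert\Delta_{g_{m}.x}\vert$ is precisely the condition for $x\in E^{d,r}_{[\lambda],\delta}$. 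The routine parts are the weight computation and the convex-geometry lemma. The main obstacle is this last step: one must work with a consistent normalization of cocharacters — Kempf's optimal cocharacter and the distance-vector cocharacter agree only up to a positive scalar, so one passes to indivisible representatives and interprets $\Lambda_{x}$ accordingly — and must transport the optimal cocharacter between $\Gamma(G)$ and $\Gamma(T)$ while tracking $G$-conjugacy classes; a secondary point requiring care is the rationality/density argument, ensuring that passing from integral cocharacters to arbitrary unit directions changes neither the relevant maxima nor the separation statement.
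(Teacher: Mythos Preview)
Your proposal is correct and is essentially the standard argument from the references the paper cites; the paper itself does not give a proof but simply defers the semi-stability part to \cite{Ian} and the instability/stratum identification to \cite{Hesselink} and \cite{Kempf}. What you have written is precisely the content of those references specialized to the present setting (the $\mu$/Newton-polytope dictionary, the separating-hyperplane characterization of $\xi_{r,d}\in\Delta_{g.x}$, the distance formula $\sup_{\lambda'\in\Gamma(T)}\mu(g.x,\lambda')/\Vert\lambda'\Vert=\vert\Delta_{g.x}\vert$, and the use of Kempf's transitivity to pin down the conjugacy class), including the normalization caveat you flag at the end.
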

\begin{proof}
See \cite{Ian} for semi-stability case. See \cite{Hesselink} and \cite{Kempf} for the remainder. This theorem also can be derived from \cite[Theorem 2.2]{instability}.
\end{proof}
From now on, we will call $\Delta_{g_{m}.x}$ in Theorem~\ref{polytope} as a worst state polytope of $x$.
\begin{example}
Let $k=\mathbb{C}$ and $f=y^{3}+2xy^{2}+yz^{2}\in \mathbb{C}[x, y, z]_{3}$ and let $[f]$ be the class in $\mathbb{P}(\mathbb{C}[x, y, z]_{3})$ containing $f$. The state polytope $\Delta_{[f]}$ looks like the gray triangle in figure \ref{example1}. 
\begin{figure}[h!]
\begin{tikzpicture}
\coordinate (up) at (0,sqrt 3);
\coordinate (left) at (-1.5,-1/2*sqrt 3);
\coordinate (right) at (1.5,-1/2*sqrt 3);
\filldraw [black] (barycentric cs:up=1,left=1,right=1) circle (1pt);
\node at (barycentric cs:up=0.7,left=1,right=1) {$\xi_{2, 3}$};
\filldraw [black] (barycentric cs:up=1,left=0,right=0) circle (1pt);
\node at (barycentric cs:up=1.1,left=-0.1,right=0) {$y^{3}$};
\filldraw [black] (barycentric cs:up=2,left=1,right=0) circle (1pt);
\node at (barycentric cs:up=2,left=1.4,right=-0.4) {$xy^{2}$};
\filldraw [black] (barycentric cs:up=2,left=0,right=1) circle (1pt);
\filldraw [black] (barycentric cs:up=1,left=2,right=0) circle (1pt);
\filldraw [black] (barycentric cs:up=1,left=0,right=2) circle (1pt);
\node at (barycentric cs:up=1,left=-0.4,right=2.4) {$yz^{2}$};
\filldraw [black] (barycentric cs:up=0,left=2,right=1) circle (1pt);
\filldraw [black] (barycentric cs:up=0,left=1,right=2) circle (1pt);
\filldraw [black] (barycentric cs:up=0,left=1,right=0) circle (1pt);
\filldraw [black] (barycentric cs:up=0,left=0,right=1) circle (1pt);
\filldraw [gray] (barycentric cs:up=1,left=0,right=0)--(barycentric cs:up=2,left=1,right=0)--(barycentric cs:up=1,left=0,right=2);
\end{tikzpicture}
\caption{The state polytope of $[y^{3}+2xy^{2}+yz^{2}]$.}
\label{example1}
\end{figure}\\
It is easy to observe that $f$ is a product of two polynomials $y$ and $y^{2}+2xy+z^{2}$ , which are both irreducible. We can also notice that $g\in \mathbb{C}[x, y, z]_{3}$ must be the product of some linear polynomials if $g$ satisfies $|\Delta_{[g]}|>|\Delta_{[f]}|$. That is, $ \vert\Delta_{[f]}\vert = \max_{g\in \textup{SL}_{r+1}(\mathbb{C})}\vert\Delta_{g.[f]}\vert$, so that $[f]\in E^{3, 2}_{[\lambda], \sqrt{2}/2}$ where $\lambda(t)=\textup{diag} (t, t^{-1}, 1)\in \textup{SL}_{r+1}(\mathbb{C})$ for all $t\in\mathbb{C}^{\times}$. We will use this example throughout the next section.
\end{example}

\section{Multiplicities and adapted-1PS of a destabilization}
We can see that the multiplicity of a hypersurface $H_{p}$ represented by $p\in \textup{Hilb}^{P_{r, d}}(\mathbb{P}_{k}^{r})$ at a point $[1:0:, \ldots, : 0]\in\mathbb{P}_{k}^{r}$ is determined by a supporting hyperplane of $\Delta_{p}$ in \cite[Lemma 4.1]{hesselsing}. For example, if $p=[y^{3}+2xy^{2}+yz^{2}]\in \mathbb{P}\left(\mathbb{C}[x, y, z]_{3}\right)\cong\textup{Hilb}^{P_{2, 3}}\left(\mathbb{P}^{2}_{\mathbb{C}}\right)$, then we can compute the multiplicity of the curve represented by $x$ at $[1:0:0]$ by looking at figure \ref{example2}.\\
\begin{figure}[h!]
\begin{tikzpicture}
\coordinate (up) at (0,sqrt 3);
\coordinate (left) at (-1.5,-1/2*sqrt 3);
\coordinate (right) at (1.5,-1/2*sqrt 3);
\filldraw [black] (barycentric cs:up=1,left=1,right=1) circle (1pt);
\node at (barycentric cs:up=0.7,left=1.3,right=0.7) {$\xi_{2, 3}$};
\filldraw [black] (barycentric cs:up=1,left=0,right=0) circle (1pt);
\node at (barycentric cs:up=1.1,left=-0.1,right=0) {$y^{3}$};
\filldraw [black] (barycentric cs:up=2,left=1,right=0) circle (1pt);
\node at (barycentric cs:up=2,left=1.4,right=-0.4) {$xy^{2}$};
\filldraw [black] (barycentric cs:up=2,left=0,right=1) circle (1pt);
\filldraw [black] (barycentric cs:up=1,left=2,right=0) circle (1pt);
\filldraw [black] (barycentric cs:up=1,left=0,right=2) circle (1pt);
\node at (barycentric cs:up=1,left=-0.4,right=2.4) {$yz^{2}$};
\filldraw [black] (barycentric cs:up=0,left=2,right=1) circle (1pt);
\filldraw [black] (barycentric cs:up=0,left=1,right=2) circle (1pt);
\filldraw [black] (barycentric cs:up=0,left=1,right=0) circle (1pt);
\filldraw [black] (barycentric cs:up=0,left=0,right=1) circle (1pt);
\filldraw [gray] (barycentric cs:up=1,left=0,right=0)--(barycentric cs:up=2,left=1,right=0)--(barycentric cs:up=1,left=0,right=2);
\draw [black,dashed] (barycentric cs:up=1.5,left=2,right=-0.5)--(barycentric cs:up=-0.5,left=2,right=1.5);
\node at (barycentric cs:up=-0.7,left=1.7,right=1.8) {$1$};
\draw [black,dashed] (barycentric cs:up=0.5,left=3,right=-0.5)--(barycentric cs:up=-0.5,left=3,right=0.5);
\node at (barycentric cs:up=-0.7,left=3.7,right=-0.2) {multiplicity $0$};
\draw [black] (barycentric cs:up=2.5,left=1,right=-0.5)--(barycentric cs:up=-0.5,left=1,right=2.5);
\node at (barycentric cs:up=-0.7,left=0.7,right=2.8) {$2$};
\end{tikzpicture}
\caption{The lines which determine the multiplicity at $[1:0:0]$.}
\label{example2}
\end{figure} 
Theorem~\ref{polytope} means that the Kempf index is the radius of the sphere which is centered at $\xi_{r, d}$ and tangent to a worst state polytope, as we can see in figure \ref{example1}. Therefore, comparing instability and singularity of a hypersurface can be considered as comparing hyperplane and sphere. When does a sphere looks like a hyperplane? We may increase the radius of the sphere and look at it locally around state polytope, by multiplying a monomial. For example, the state polytope $\Delta_{[y^{N}z^{N}(y^{3}+2xy^{2}+yz^{2})]}$ of $[y^{N}z^{N}(y^{3}+2xy^{2}+yz^{2})]\in \left(\mathbb{C}[x, y, z]_{3+2N}\right)\cong\textup{Hilb}^{P_{2, 3+2N}}\left(\mathbb{P}^{2}_{\mathbb{C}}\right)$ looks like the gray triangle in figure \ref{example3} when $N\in \{0, 2, 4\}$. We can see that sphere approaches hyperplane around $\Delta_{[y^{N}z^{N}(y^{3}+2xy^{2}+yz^{2})]}$ as $N\rightarrow\infty$ in figure \ref{example3}.
\begin{figure}[h!]
\hfill
\begin{tikzpicture}
\coordinate (up) at (0,sqrt 3);
\coordinate (left) at (-1.5,-1/2*sqrt 3);
\coordinate (right) at (1.5,-1/2*sqrt 3);
\filldraw [black] (barycentric cs:up=1,left=1,right=1) circle (1pt);
\filldraw [black] (barycentric cs:up=1,left=0,right=0) circle (1pt);
\node at (barycentric cs:up=1.1,left=-0.1,right=0) {$y^{3}$};
\filldraw [black] (barycentric cs:up=2,left=1,right=0) circle (1pt);
\node at (barycentric cs:up=0,left=1.1,right=-0.1) {$x^{3}$};
\filldraw [black] (barycentric cs:up=2,left=0,right=1) circle (1pt);
\filldraw [black] (barycentric cs:up=1,left=2,right=0) circle (1pt);
\filldraw [black] (barycentric cs:up=1,left=0,right=2) circle (1pt);
\node at (barycentric cs:up=0,left=-0.15,right=1.15) {$z^{3}$};
\filldraw [black] (barycentric cs:up=0,left=2,right=1) circle (1pt);
\filldraw [black] (barycentric cs:up=0,left=1,right=2) circle (1pt);
\filldraw [black] (barycentric cs:up=0,left=1,right=0) circle (1pt);
\filldraw [black] (barycentric cs:up=0,left=0,right=1) circle (1pt);
\filldraw [gray] (barycentric cs:up=1,left=0,right=0)--(barycentric cs:up=2,left=1,right=0)--(barycentric cs:up=1,left=0,right=2);
\draw (barycentric cs:up=1,left=1,right=1) circle [radius=.5];
\draw (barycentric cs:up=2.5,left=1,right=-0.5)--(barycentric cs:up=-0.5,left=1,right=2.5);
\node at (barycentric cs:up=2,left=2,right=-1) {$N=0$};
\end{tikzpicture}
\hfill
\begin{tikzpicture}
\coordinate (up) at (0,sqrt 3);
\coordinate (left) at (-1.5,-1/2*sqrt 3);
\coordinate (right) at (1.5,-1/2*sqrt 3);
\node at (barycentric cs:up=1.1,left=-0.1,right=0) {$y^{7}$};
\node at (barycentric cs:up=0,left=1.1,right=-0.1) {$x^{7}$};
\node at (barycentric cs:up=0,left=-0.15,right=1.15) {$z^{7}$};
\filldraw [black] (barycentric cs:up=1,left=0,right=0) circle (1pt);
\filldraw [black] (barycentric cs:up=6,left=1,right=0) circle (1pt);
\filldraw [black] (barycentric cs:up=6,left=0,right=1) circle (1pt);
\filldraw [black] (barycentric cs:up=5,left=2,right=0) circle (1pt);
\filldraw [black] (barycentric cs:up=5,left=1,right=1) circle (1pt);
\filldraw [black] (barycentric cs:up=5,left=0,right=2) circle (1pt);
\filldraw [black] (barycentric cs:up=4,left=3,right=0) circle (1pt);
\filldraw [black] (barycentric cs:up=4,left=2,right=1) circle (1pt);
\filldraw [black] (barycentric cs:up=4,left=1,right=2) circle (1pt);
\filldraw [black] (barycentric cs:up=4,left=0,right=3) circle (1pt);
\filldraw [black] (barycentric cs:up=3,left=4,right=0) circle (1pt);
\filldraw [black] (barycentric cs:up=3,left=3,right=1) circle (1pt);
\filldraw [black] (barycentric cs:up=3,left=2,right=2) circle (1pt);
\filldraw [black] (barycentric cs:up=3,left=1,right=3) circle (1pt);
\filldraw [black] (barycentric cs:up=3,left=0,right=4) circle (1pt);
\filldraw [black] (barycentric cs:up=2,left=5,right=0) circle (1pt);
\filldraw [black] (barycentric cs:up=2,left=4,right=1) circle (1pt);
\filldraw [black] (barycentric cs:up=2,left=3,right=2) circle (1pt);
\filldraw [black] (barycentric cs:up=2,left=2,right=3) circle (1pt);
\filldraw [black] (barycentric cs:up=2,left=1,right=4) circle (1pt);
\filldraw [black] (barycentric cs:up=2,left=0,right=5) circle (1pt);
\filldraw [black] (barycentric cs:up=1,left=6,right=0) circle (1pt);
\filldraw [black] (barycentric cs:up=1,left=5,right=1) circle (1pt);
\filldraw [black] (barycentric cs:up=1,left=4,right=2) circle (1pt);
\filldraw [black] (barycentric cs:up=1,left=3,right=3) circle (1pt);
\filldraw [black] (barycentric cs:up=1,left=2,right=4) circle (1pt);
\filldraw [black] (barycentric cs:up=1,left=1,right=5) circle (1pt);
\filldraw [black] (barycentric cs:up=1,left=0,right=6) circle (1pt);
\filldraw [black] (barycentric cs:up=0,left=7,right=0) circle (1pt);
\filldraw [black] (barycentric cs:up=0,left=6,right=1) circle (1pt);
\filldraw [black] (barycentric cs:up=0,left=5,right=2) circle (1pt);
\filldraw [black] (barycentric cs:up=0,left=4,right=3) circle (1pt);
\filldraw [black] (barycentric cs:up=0,left=3,right=4) circle (1pt);
\filldraw [black] (barycentric cs:up=0,left=2,right=5) circle (1pt);
\filldraw [black] (barycentric cs:up=0,left=1,right=6) circle (1pt);
\filldraw [black] (barycentric cs:up=0,left=0,right=7) circle (1pt);
\filldraw [gray] (barycentric cs:up=5,left=0,right=2)--(barycentric cs:up=4,left=1,right=2)--(barycentric cs:up=3,left=0,right=4);
\draw (barycentric cs:up=1,left=1,right=1) circle [radius=9/14];
\draw (barycentric cs:up=6.5,left=1,right=-0.5)--(barycentric cs:up=-0.5,left=1,right=6.5);
\node at (barycentric cs:up=2,left=2,right=-1) {$N=2$};
\end{tikzpicture}
\hfill
\begin{tikzpicture}
\coordinate (up) at (0,sqrt 3);
\coordinate (left) at (-1.5,-1/2*sqrt 3);
\coordinate (right) at (1.5,-1/2*sqrt 3);
\filldraw [black] (barycentric cs:up=11,left=0,right=0) circle (1pt);
\node at (barycentric cs:up=1.1,left=-0.1,right=0) {$y^{11}$};
\filldraw [black] (barycentric cs:up=10,left=1,right=0) circle (1pt);
\filldraw [black] (barycentric cs:up=10,left=0,right=1) circle (1pt);
\filldraw [black] (barycentric cs:up=9,left=2,right=0) circle (1pt);
\filldraw [black] (barycentric cs:up=9,left=1,right=1) circle (1pt);
\filldraw [black] (barycentric cs:up=9,left=0,right=2) circle (1pt);
\filldraw [black] (barycentric cs:up=8,left=3,right=0) circle (1pt);
\filldraw [black] (barycentric cs:up=8,left=2,right=1) circle (1pt);
\filldraw [black] (barycentric cs:up=8,left=1,right=2) circle (1pt);
\filldraw [black] (barycentric cs:up=8,left=0,right=3) circle (1pt);
\filldraw [black] (barycentric cs:up=7,left=4,right=0) circle (1pt);
\filldraw [black] (barycentric cs:up=7,left=3,right=1) circle (1pt);
\filldraw [black] (barycentric cs:up=7,left=2,right=2) circle (1pt);
\filldraw [black] (barycentric cs:up=7,left=1,right=3) circle (1pt);
\filldraw [black] (barycentric cs:up=7,left=0,right=4) circle (1pt);
\filldraw [black] (barycentric cs:up=6,left=5,right=0) circle (1pt);
\filldraw [black] (barycentric cs:up=6,left=4,right=1) circle (1pt);
\filldraw [black] (barycentric cs:up=6,left=3,right=2) circle (1pt);
\filldraw [black] (barycentric cs:up=6,left=2,right=3) circle (1pt);
\filldraw [black] (barycentric cs:up=6,left=1,right=4) circle (1pt);
\filldraw [black] (barycentric cs:up=6,left=0,right=5) circle (1pt);
\filldraw [black] (barycentric cs:up=5,left=6,right=0) circle (1pt);
\filldraw [black] (barycentric cs:up=5,left=5,right=1) circle (1pt);
\filldraw [black] (barycentric cs:up=5,left=4,right=2) circle (1pt);
\filldraw [black] (barycentric cs:up=5,left=3,right=3) circle (1pt);
\filldraw [black] (barycentric cs:up=5,left=2,right=4) circle (1pt);
\filldraw [black] (barycentric cs:up=5,left=1,right=5) circle (1pt);
\filldraw [black] (barycentric cs:up=5,left=0,right=6) circle (1pt);
\filldraw [black] (barycentric cs:up=4,left=7,right=0) circle (1pt);
\filldraw [black] (barycentric cs:up=4,left=6,right=1) circle (1pt);
\filldraw [black] (barycentric cs:up=4,left=5,right=2) circle (1pt);
\filldraw [black] (barycentric cs:up=4,left=4,right=3) circle (1pt);
\filldraw [black] (barycentric cs:up=4,left=3,right=4) circle (1pt);
\filldraw [black] (barycentric cs:up=4,left=2,right=5) circle (1pt);
\filldraw [black] (barycentric cs:up=4,left=1,right=6) circle (1pt);
\filldraw [black] (barycentric cs:up=4,left=0,right=7) circle (1pt);
\filldraw [black] (barycentric cs:up=3,left=8,right=0) circle (1pt);
\filldraw [black] (barycentric cs:up=3,left=7,right=1) circle (1pt);
\filldraw [black] (barycentric cs:up=3,left=6,right=2) circle (1pt);
\filldraw [black] (barycentric cs:up=3,left=5,right=3) circle (1pt);
\filldraw [black] (barycentric cs:up=3,left=4,right=4) circle (1pt);
\filldraw [black] (barycentric cs:up=3,left=3,right=5) circle (1pt);
\filldraw [black] (barycentric cs:up=3,left=2,right=6) circle (1pt);
\filldraw [black] (barycentric cs:up=3,left=1,right=7) circle (1pt);
\filldraw [black] (barycentric cs:up=3,left=0,right=8) circle (1pt);
\filldraw [black] (barycentric cs:up=2,left=9,right=0) circle (1pt);
\filldraw [black] (barycentric cs:up=2,left=8,right=1) circle (1pt);
\filldraw [black] (barycentric cs:up=2,left=7,right=2) circle (1pt);
\filldraw [black] (barycentric cs:up=2,left=6,right=3) circle (1pt);
\filldraw [black] (barycentric cs:up=2,left=5,right=4) circle (1pt);
\filldraw [black] (barycentric cs:up=2,left=4,right=5) circle (1pt);
\filldraw [black] (barycentric cs:up=2,left=3,right=6) circle (1pt);
\filldraw [black] (barycentric cs:up=2,left=2,right=7) circle (1pt);
\filldraw [black] (barycentric cs:up=2,left=1,right=8) circle (1pt);
\filldraw [black] (barycentric cs:up=2,left=0,right=9) circle (1pt);
\filldraw [black] (barycentric cs:up=1,left=10,right=0) circle (1pt);
\filldraw [black] (barycentric cs:up=1,left=9,right=1) circle (1pt);
\filldraw [black] (barycentric cs:up=1,left=8,right=2) circle (1pt);
\filldraw [black] (barycentric cs:up=1,left=7,right=3) circle (1pt);
\filldraw [black] (barycentric cs:up=1,left=6,right=4) circle (1pt);
\filldraw [black] (barycentric cs:up=1,left=5,right=5) circle (1pt);
\filldraw [black] (barycentric cs:up=1,left=4,right=6) circle (1pt);
\filldraw [black] (barycentric cs:up=1,left=3,right=7) circle (1pt);
\filldraw [black] (barycentric cs:up=1,left=2,right=8) circle (1pt);
\filldraw [black] (barycentric cs:up=1,left=1,right=9) circle (1pt);
\filldraw [black] (barycentric cs:up=1,left=0,right=10) circle (1pt);
\filldraw [black] (barycentric cs:up=0,left=11,right=0) circle (1pt);
\node at (barycentric cs:up=0,left=1.1,right=-0.1) {$x^{11}$};
\filldraw [black] (barycentric cs:up=0,left=10,right=1) circle (1pt);
\filldraw [black] (barycentric cs:up=0,left=9,right=2) circle (1pt);
\filldraw [black] (barycentric cs:up=0,left=8,right=3) circle (1pt);
\filldraw [black] (barycentric cs:up=0,left=7,right=4) circle (1pt);
\filldraw [black] (barycentric cs:up=0,left=6,right=5) circle (1pt);
\filldraw [black] (barycentric cs:up=0,left=5,right=6) circle (1pt);
\filldraw [black] (barycentric cs:up=0,left=4,right=7) circle (1pt);
\filldraw [black] (barycentric cs:up=0,left=3,right=8) circle (1pt);
\filldraw [black] (barycentric cs:up=0,left=2,right=9) circle (1pt);
\filldraw [black] (barycentric cs:up=0,left=1,right=10) circle (1pt);
\filldraw [black] (barycentric cs:up=0,left=0,right=11) circle (1pt);
\node at (barycentric cs:up=0,left=-0.15,right=1.15) {$z^{11}$};
\filldraw [gray] (barycentric cs:up=7,left=0,right=4)--(barycentric cs:up=6,left=1,right=4)--(barycentric cs:up=5,left=0,right=6);
\draw (barycentric cs:up=1,left=1,right=1) circle [radius=sqrt{57}/11];
\draw (barycentric cs:up=11,left=1,right=-1)--(barycentric cs:up=-1,left=1,right=11);
\node at (barycentric cs:up=2,left=2,right=-1) {$N=4$};
\end{tikzpicture}
\caption{Supporting hyperplane and circle of the state polytope of $[y^{N}z^{N}(y^{3}+2xy^{2}+yz^{2})]$, when $N$ is equal to  $0$, $2$ and $4$.}
\label{example3}
\end{figure}
To state this idea generally, we need to embed the Hilbert scheme via morphism
\begin{displaymath}
\phi_{r, d, N}:\textup{Hilb}^{P_{r, d}}(\mathbb{P}_{k}^{r})\rightarrow\textup{Hilb}^{P_{r, d+rN}}(\mathbb{P}_{k}^{r})
\end{displaymath}
which maps $[f]\in \mathbb{P}(k[x_{0}, x_{1}, \ldots, x_{r}]_{d})\cong\textup{Hilb}^{P_{d}}(\mathbb{P}_{k}^{r})$ to
\begin{displaymath}
\left[ f\prod_{i=1}^{r}x_{i}^{N}\right] \in \mathbb{P}(k[x_{0}, x_{1}, \ldots, x_{r}]_{d+rN})\cong\textup{Hilb}^{P_{d+rN}}(\mathbb{P}_{k}^{r}).
\end{displaymath}
For an arbitrary $q\in\textup{Hilb}^{P_{r, d}}(\mathbb{P}_{k}^{r})$, $\Delta_{\phi_{r, d, N}(q)}$ is contained in the polytope
\begin{displaymath}
Q_{d, N}^{r}=\bigg\{(y_{0}, y_{1}, \ldots, y_{r})\in\mathbb{R}^{r+1}\bigg\vert \sum_{i=0}^{r}y_{i}=d+rN, y_{i}\geq N\textrm{ for all }1\leq i\leq r, y_{0}\geq 0 \bigg\}.
\end{displaymath}
$\Delta_{\phi_{r, d, N}(q)}$ is not always a worst state polytope of $\phi_{r, d, N}(q)$. However, the distance vector from $\xi_{r, d+rN}$ to a worst state polytope of $\phi_{r, d, N}(q)$ is lying in some bounded region if $N> d$. Let $l_{d, N, m}^{r}$ be the Euclidean distance from $\xi_{r, d+rN}$ to the point $(d-m, m+N, N, \ldots, N)\in\mathbb{R}^{r+1}$. We can check that
\begin{equation}
\label{maxlength}
l_{d, N, m}^{r}=\max\bigg\{\vert\xi_{r, d+rN}-y\vert\bigg\vert y=(y_{i})_{i=0}^{r}\in Q_{d, N}^{r}, y_{0}=d-m\bigg\}.
\end{equation}
Let
\begin{displaymath}
B_{d, N, m}^{r}=\bigg\{y=(y_{i})_{i=0}^{r}\in\mathbb{R}^{r+1}_{\geq 0}\bigg\vert \sum_{i=0}^{r}y_{i}=d+rN, \vert\xi_{r, d+rN}-y \vert\leq l^{r}_{d, N, m}, y_{0}\leq d-m \bigg\}.
\end{displaymath}
The following lemma shows that we can derive a condition satisfied by the associated 1-parameter subgroup and the Kempf index of $\phi_{r, d, N}(q)$, if we know $\textup{mult}_{[1:0:\ldots:0]}H_{q}$.
\begin{lemma}
\label{restriction}
Suppose that $q\in\textup{Hilb}^{P_{r, d}}(\mathbb{P}_{k}^{r})$, $\textup{mult}_{[1:0:\ldots:0]}H_{q}=m$ and $N> d$. Then, $\phi_{r, d, N}(q)\in E_{[\lambda], \delta}^{d+Nr, r}$ for some $\lambda\in\Gamma(T)$ and $\delta>0$ satisfying
\begin{displaymath}
\frac{\delta}{\Vert\lambda\Vert}\iota(h(\lambda)\otimes_{\mathbb{Z}}1)\in B_{d, N, m}^{r}.
\end{displaymath}
\end{lemma}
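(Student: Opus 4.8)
The plan is to pass to state polytopes via Theorem~\ref{polytope}, to extract the required $\lambda$ and $\delta$ from Kempf's theorem, and then to pin down the worst state polytope of $\phi_{r,d,N}(x)$ by comparison with the orbit of $\phi_{r,d,N}(x)$ under a suitable unipotent subgroup $U\subseteq G$.

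First I would record the shape of $\Delta_{\phi_{r,d,N}(x)}$. Multiplication by $\prod_{i=1}^r x_i^N$ translates Newton polytopes by $N(\mathbf e_1+\cdots+\mathbf e_r)$, so $\Delta_{\phi_{r,d,N}(x)}=\Delta_x+N\sum_{i=1}^r\mathbf e_i$, and since $\textup{mult}_{[1:0:\ldots:0]}H_x=m$ forces $\max\{y_0:y\in\Delta_x\}=d-m$ by \cite[Lemma 4.1]{hesselsing}, we get $\Delta_{\phi_{r,d,N}(x)}\subseteq Q_{d,N}^r$, with the face $\{y_0=d-m\}$ met. As $N>d$, every coordinate of $\xi_{r,d+rN}=\frac{d+rN}{r+1}\mathbbm 1$ is smaller than $N$, so $\xi_{r,d+rN}\notin Q_{d,N}^r\supseteq\Delta_{\phi_{r,d,N}(x)}$; hence by Theorem~\ref{polytope} (with $g=\textup{id}$) the point $\phi_{r,d,N}(x)$ is unstable. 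Kempf's Theorem~\ref{Kempftheorem} and Theorem~\ref{polytope} then give $g_m\in G$, a worst state polytope $\Delta_{g_m.\phi_{r,d,N}(x)}$, the Kempf index $\delta=\vert\Delta_{g_m.\phi_{r,d,N}(x)}\vert>0$, and an indivisible $\lambda:=\lambda_{g_m.\phi_{r,d,N}(x)}\in\Gamma(T)$ with $\phi_{r,d,N}(x)\in E_{[\lambda],\delta}^{d+rN,r}$. Since $\Vert\lambda\Vert=\vert\iota(h(\lambda)\otimes_{\mathbb{Z}}1)\vert$, the vector $\frac{\delta}{\Vert\lambda\Vert}\iota(h(\lambda)\otimes_{\mathbb{Z}}1)$ has length $\delta$ and runs along the distance vector from $\xi_{r,d+rN}$ to $\Delta_{g_m.\phi_{r,d,N}(x)}$; translating by $\xi_{r,d+rN}$, the assertion to be proved becomes: the point $p$ of $\Delta_{g_m.\phi_{r,d,N}(x)}$ nearest to $\xi_{r,d+rN}$ lies in $B_{d,N,m}^r$. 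As $p$ lies in the degree-$(d+rN)$ simplex, the conditions $p\in\mathbb R_{\ge 0}^{r+1}$ and $\sum_i p_i=d+rN$ are automatic and $\vert\xi_{r,d+rN}-p\vert=\delta$; so it remains to prove (i) $\delta\le l_{d,N,m}^r$, and (ii) $p_0\le d-m$ after an appropriate choice of the Weyl-chamber representative of $\lambda$.

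For the core of the argument I would try to show that $g_m$ may be chosen in $U=\{g\in G:g.x_i=x_i\ (1\le i\le r),\ g.x_0=x_0+\textstyle\sum_{i=1}^r c_i x_i\}$, the unipotent radical of the maximal parabolic $\pi(\sigma_0)=\{g\in G:g.x_i\in\langle x_1,\ldots,x_r\rangle,\ 1\le i\le r\}$ of the destabilizing subgroup $\sigma_0(t)=\textup{diag}(t^{-r},t,\ldots,t)$, for which a short computation gives $\mu(\phi_{r,d,N}(x),\sigma_0)=r(N-d)+(r+1)m>0$. Granting this, for $u\in U$ one has $u.\phi_{r,d,N}(x)=(u.f)\prod_{i=1}^r x_i^N$ with $u.f=f(x_0+\sum_i c_i x_i,x_1,\ldots,x_r)$ still of $x_0$-degree exactly $d-m$ (the substitution cannot raise the $x_0$-degree), hence $\Delta_{g_m.\phi_{r,d,N}(x)}\subseteq Q_{d,N}^r\cap\{y_0\le d-m\}$. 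Then (ii) is immediate since $p_0\le d-m$ with no relabeling, and for (i) one observes that a monomial $x_0^{d-m}x^{\beta}$ of $g_m.f$ with $\vert\beta\vert=m$ and $\beta_0=0$ puts the lattice point $(d-m,\beta_1+N,\ldots,\beta_r+N)\in Q_{d,N}^r$ on the face $y_0=d-m$ of $\Delta_{g_m.\phi_{r,d,N}(x)}$, so \eqref{maxlength} yields $\delta=\textup{dist}(\xi_{r,d+rN},\Delta_{g_m.\phi_{r,d,N}(x)})\le l_{d,N,m}^r$. Combining (i), (ii) and the automatic conditions gives $p\in B_{d,N,m}^r$.

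The step I expect to be the main obstacle is the reduction ``$g_m\in U$'': it is precisely where the paper's picture ``enlarging $N$ makes the tangent sphere look locally flat'' must be made rigorous. It is strongly suggested by $\pi(\sigma_0)$ being a maximal parabolic and by the two transparent extreme cases — for generic $g$ one has $\xi_{r,d+rN}\in\Delta_{g.\phi_{r,d,N}(x)}$ (every $\Delta_{g.x_i}$ is the full simplex, so $\xi_{r,d+rN}\in\alpha+rN\Delta^r\subseteq\Delta_{g.\phi_{r,d,N}(x)}$ for any exponent $\alpha$ of $g.f$, using $\alpha_i\le d<\frac{d+rN}{r+1}$), and for a monomial $g$ the polytope $\Delta_{g.\phi_{r,d,N}(x)}$ is a coordinate permutation of $\Delta_{\phi_{r,d,N}(x)}$, to which \eqref{maxlength} applies. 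Making it precise requires either a genuine use of Kempf's transitivity statement (after conjugating the optimal one-parameter subgroup into the $\pi(\sigma_0)$-dominant chamber of $\Gamma(T)$), or a direct verification that $\textup{dist}(\xi_{r,d+rN},\Delta_{g.\phi_{r,d,N}(x)})\le l_{d,N,m}^r$ for \emph{every} $g\in G$ from the Minkowski decomposition $\Delta_{g.\phi_{r,d,N}(x)}=\Delta_{g.f}+\sum_{i=1}^r N\,\Delta_{g.x_i}$ together with the fact that $H_{g.f}$ still carries a point of multiplicity $\ge m$. As a safeguard for (ii) that avoids this reduction altogether, one may note that $\max_{p'}\textup{mult}_{p'}H_{\phi_{r,d,N}(x)}=rN+m$ (attained only at $[1:0:\ldots:0]$, as $N>d\ge m$), and apply the right-hand inequality of \eqref{prevresult} to $\phi_{r,d,N}(x)\in E_{[\lambda],\delta}^{d+rN,r}$: choosing the representative of $[\lambda]$ with $a_0=a=\min_i a_i$ gives $\frac{a\delta}{\Vert\lambda\Vert}\le\frac{r(d+rN)}{r+1}-rN-m$, whence $p_0=\frac{d+rN}{r+1}+\frac{a\delta}{\Vert\lambda\Vert}\le(d+rN)-rN-m=d-m$.
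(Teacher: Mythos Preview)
Your reduction of the statement to the two conditions (i) $\delta\le l_{d,N,m}^r$ and (ii) $p_0\le d-m$ is correct, and your ``safeguard'' for (ii) via the right-hand inequality of \eqref{prevresult} is a clean alternative to the paper's use of \cite[Lemma~4.1]{hesselsing}. The genuine gap is (i): you do not prove $\delta\le l_{d,N,m}^r$, and you say so yourself. The hoped-for reduction ``$g_m\in U$'' is not only unproved but in general false---the optimal $g_m$ need not lie in that small unipotent, only in the stabilizer of $[1:0:\ldots:0]$---so neither of your two sketched routes (Kempf transitivity into the $\pi(\sigma_0)$-chamber, or a direct bound for \emph{all} $g\in G$) can be completed as stated without substantial new input.

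The paper closes exactly this gap, and the mechanism is worth comparing to your outline. First, because $N>d$ forces $[1:0:\ldots:0]$ to be the \emph{unique} point of maximal multiplicity of $H_{\phi_{r,d,N}(x)}$, \cite[Lemma~4.2]{hesselsing} lets one choose a worst $g$ fixing $[1:0:\ldots:0]$, i.e.\ with block form $\left[\begin{smallmatrix}1&0\\ \ast&g'\end{smallmatrix}\right]$, $g'\in\textup{SL}_r(k)$. Then a Bruhat-type factorization $g'=ulq$ is taken on the $\textup{SL}_r$-block; after ordering the weights one checks $\bar u^{-1}\in\pi(\lambda_{g.\phi_{r,d,N}(x)})$, so by Theorem~\ref{Kempftheorem} the polytope $\Delta_{\bar u^{-1}g.\phi_{r,d,N}(x)}$ is \emph{still} worst. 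The point is not that this new $g_m=\bar l\,\bar q$ lies in your $U$, but that one can track a single monomial through it: $\bar q$ permutes $x_1,\ldots,x_r$, so $\bar q.\phi_{r,d,N}(x)$ stays in $Q_{d,N}^r$ and its lex-leading monomial $\eta$ has $\deg_{x_0}\eta=d-m$; and $\bar l$ is lower-triangular, hence preserves that lex-leading monomial. Thus $\eta\in\Delta_{\bar u^{-1}g.\phi_{r,d,N}(x)}\cap Q_{d,N}^r$ with $\eta_0=d-m$, and \eqref{maxlength} gives $\delta\le|\xi_{r,d+rN}-\eta|\le l_{d,N,m}^r$. This is the missing idea your proposal lacks: rather than forcing the worst polytope back into $Q_{d,N}^r$, one only needs to exhibit a single lattice point of it that lies there, and the Bruhat/lex-leading-monomial trick does precisely that.
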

\begin{proof}
Let $H^{N}$ be the hypersurface embedded in $\mathbb{P}_{k}^{r}$ defined by the polynomial $\prod_{i=1}^{r}x_{i}^{N}$. $H^{N}$ has a unique closed point $[1:0:\ldots:0]$ of maximal multiplicity and $\textup{mult}_{p}H^{N}\leq (r-1)N$ for $p\neq[1:0:\ldots:0]
$. 

 Note that
\begin{displaymath}
\textup{mult}_{p}H_{\phi_{r, d, N}(q)}=\textup{mult}_{p}H_{q}+\textup{mult}_{p}H^{N}.
\end{displaymath}
for all $p\in \mathbb{P}_{k}^{r}$. If $p=[1:0:\ldots:0]$, then $\textup{mult}_{p}H_{\phi_{r, d, N}(q)}=m+rN$. Otherwise,
\begin{displaymath}
\textup{mult}_{p}H_{\phi_{r, d, N}(q)}=\textup{mult}_{p}H_{q}+\textup{mult}_{p}H^{N}\leq \textup{mult}_{p}H_{q}+(r-1)N.
\end{displaymath}\begin{displaymath}
\leq d+(r-1)N<rN <m+rN.
\end{displaymath}
Therefore, $[1:0:\ldots:0]$ is the unique point of $H_{\phi_{r, d, N}(q)}$ attaining maximal multiplicity. By \cite[Lemma 4.2]{hesselsing}, there is $g\in G$ such that $\Delta_{g.\phi_{r, d, N}(q)}$ is a worst state polytope of $\phi_{r, d, N}(q)$ and $[1:0:\ldots:0]$ is still the point of  $H_{g.\phi_{r, d, N}(q)}$ attaining maximal multiplicity. By the uniqueness of the point attaining maximal multiplicity, The dual action of $g$ on $\mathbb{P}_{k}^{r}$ fixes the point $[1:0:\ldots :0]$. That is, $g$ is in the form
\begin{displaymath}
g=\left[
\begin{array}{c|c}
1 & \mathbf{0} \\
\hline
\star & g'
\end{array}
\right]
\end{displaymath}
for some $g'\in \textup{SL}_{r}(k)$. Without loss of generality, $z_{i}\geq z_{i+1}$ for all $1\leq i\leq r-1$ where
\begin{displaymath}
\frac{\delta}{\Vert\lambda\Vert}\iota(h(\lambda_{g.\phi_{r, d, N}(q)})\otimes_{\mathbb{Z}}1)=(z_{0}, z_{1}, \ldots, z_{r}).
\end{displaymath}
There are upper triangular matrix $u$ with $1$'s in the diagonal, lower triangular matrix $l$ and a permutation matrix $\sigma$ satisfying $g'=ul\sigma$. Let $\overline{u}$ be the matrix of the form
\begin{displaymath}
\left[
\begin{array}{c|c}
1 & \mathbf{0} \\
\hline
\mathbf{0} & u
\end{array}
\right].
\end{displaymath}
Then, $\overline{u}^{-1}\in \pi(\lambda_{g.\phi_{r, d, N}(q)})$ so that $\Delta_{\overline{u}^{-1}g.\phi_{r, d, N}(q)}$ is a worst state polytope of $\phi_{r, d, N}(q)$ by Theorem~\ref{Kempftheorem}. Let $\overline{\sigma}$ be the matrix of the form
\begin{displaymath}
\left[
\begin{array}{c|c}
\det \sigma & \mathbf{0} \\
\hline
\mathbf{0} & \sigma
\end{array}
\right]\in\textup{SL}_{r+1}(k)
\end{displaymath}
 and let $<_{\textup{lex}}$ be the lexicographic monomial order satisfying
\begin{displaymath}
x_{0}>x_{1}>\ldots>x_{r}.
\end{displaymath}
Under the identification $\textup{Hilb}^{P_{r, d+rN}}(\mathbb{P}_{k}^{r})\cong\mathbb{P}(k[x_{0}, \ldots, x_{r}]_{d+rN})$, $\overline{\sigma}.\phi_{r, d, N}(q)=[f]$ for some $f\in k[x_{0}, \ldots, x_{r}]_{d+rN}$. Let $\eta$ be the leading monomial of $f$ with respect to $<_{\textup{lex}}$. We can write $f$ as follows:
\begin{displaymath}
f=c_{\eta}\eta+\sum_{n\in M_{d+rn}^{r}, n<_{\textup{lex}}\eta}c_{n}n
\end{displaymath}
where $M_{d}^{r}$ is the set of monomials in $k[x_{0}, \ldots, x_{r}]_{d}$ and $c_{n}\in k$ for all $n\in M_{d+rN}^{r}$. Note that $\deg_{x_{0}} \eta=d-m$ and the lattice point in $\mathbb{R}^{r+1}$ corresponding to $\eta$ is in $Q^{r}_{d, N}$. Let $\overline{l}=\overline{u}^{-1}g\overline{\sigma}^{-1}\in\textup{SL}_{r+1}(k)$ , which is in the form
\begin{displaymath}
\left[
\begin{array}{c|c}
\det{\sigma} & \mathbf{0} \\
\hline
\star & l
\end{array}
\right].
\end{displaymath}
We know that the leading monomial of $\overline{l}.f$ is still $\eta$, because $\overline{l}$ is lower-triangular. It means that the Newton polytope of $\overline{l}.f$ contains the lattice point corresponding to $\eta$. By the definition, $\Delta_{\overline{u}^{-1}g.\phi_{r, d, N}(q)}$ contains the lattice point corresponding to $\eta$. Thus,
\begin{displaymath}
\vert\Delta_{\overline{u}^{-1}g.\phi_{r, d, N}(q)}\vert\leq l_{d, N, m}^{r}
\end{displaymath}
by \eqref{maxlength}. $[1:0:\ldots:0]$ is still the point of $H_{\overline{u}^{-1}g.\phi_{r, d, N}(q)}$ which attains the maximal multiplicity so that
\begin{displaymath}
\frac{\delta}{\Vert\lambda\Vert}\iota(h(\lambda_{\overline{u}^{-1}g.\phi_{r, d, N}(q)})\otimes_{\mathbb{Z}}1) \in B_{d, N, m}^{r}
\end{displaymath}
by \cite[Lemma 4.1]{hesselsing}. By Theorem~\ref{polytope}, we know that
\begin{displaymath}
\phi_{r, d, N}(q) \in E_{[\lambda_{\overline{u}^{-1}g.\phi_{r, d, N}(q)}], \delta}^{d+Nr, r}
\end{displaymath}
and it completes the proof.
\end{proof}
When $N\gg 0$, our spheres centered at $\xi_{r, d+rN}$ looks like a plane around polynomial $Q_{d, N}^{r}$ so that we can say that the sets in $\{B_{d, N, m}^{r}|0\leq m\leq d\}$ are disjoint.
\begin{lemma}
\label{separation}
Fix $r$ and $d$. When $N> \frac{r-1}{2r}d^{2}+d$, $B_{d, N, m}^{r}\cap B_{d, N, m'}^{r}=\emptyset$ for all $0\leq m<m'\leq d$.
\end{lemma}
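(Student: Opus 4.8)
The plan is to turn the containment in $Q^{r}_{d,N}$ into a quantitative separation by comparing, for each relevant $m$, the radius $l^{r}_{d,N,m}$ of the ball cut out in the definition of $B^{r}_{d,N,m}$ against the distance from its centre $\xi_{r,d+rN}=\frac{d+rN}{r+1}\mathbbm{1}$ to the affine hyperplane $\{y_{0}=d-m'\}$. First I would record the explicit value of the radius: writing $A=N-d$ and expanding the squared Euclidean distance from $\xi_{r,d+rN}$ to the point $(d-m,m+N,N,\dots,N)$ coordinate by coordinate, the $A^{2}$, $mA$ and $m^{2}$ terms collect to give
\[
\bigl(l^{r}_{d,N,m}\bigr)^{2}=\frac{r}{r+1}A^{2}+2mA+2m^{2},
\]
which in particular is strictly increasing in $m$ once $A>0$. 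Consequently a point $y$ lying in both $B^{r}_{d,N,m}$ and $B^{r}_{d,N,m'}$ with $m<m'$ must satisfy $y_{0}\le d-m'$ and $\lvert \xi_{r,d+rN}-y\rvert\le l^{r}_{d,N,m}$, so it is enough to prove that for $N\gg 0$ there is no $y\in\mathbb{R}^{r+1}_{\ge 0}$ with $\sum_{i=0}^{r}y_{i}=d+rN$, $y_{0}\le d-m'$ and $\lvert\xi_{r,d+rN}-y\rvert\le l^{r}_{d,N,m}$.

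The key step is a sharp lower bound for $\lvert\xi_{r,d+rN}-y\rvert$ that uses the hyperplane relation $\sum_{i}y_{i}=d+rN$. Since $\sum_{i\ge 1}(\xi_{i}-y_{i})=-(\xi_{0}-y_{0})$, Cauchy--Schwarz gives $\sum_{i\ge 1}(\xi_{i}-y_{i})^{2}\ge\frac{1}{r}(\xi_{0}-y_{0})^{2}$, hence $\lvert\xi_{r,d+rN}-y\rvert^{2}\ge\frac{r+1}{r}(\xi_{0}-y_{0})^{2}$; equivalently, $y$ is separated from $\xi_{r,d+rN}$ by the codimension-one affine subspace $\{y_{0}=d-m'\}$ of the simplex hyperplane, whose distance to $\xi_{r,d+rN}$ one computes by projecting $\mathbf{e}_{0}$ onto the tangent space $\{\sum_{i}v_{i}=0\}$. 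Either way, using $\xi_{0}=\frac{d+rN}{r+1}=d-m'+\frac{rA+(r+1)m'}{r+1}$ together with $y_{0}\le d-m'$, one obtains
\[
\lvert\xi_{r,d+rN}-y\rvert\ \ge\ \frac{rA+(r+1)m'}{\sqrt{r(r+1)}}.
\]

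Combining the two displays and squaring, the existence of such a $y$ would force $\dfrac{\bigl(rA+(r+1)m'\bigr)^{2}}{r(r+1)}\le \dfrac{r}{r+1}A^{2}+2mA+2m^{2}$, and since $\frac{(rA+(r+1)m')^{2}}{r(r+1)}=\frac{r}{r+1}A^{2}+2m'A+\frac{r+1}{r}m'^{2}$ the quadratic terms in $A$ cancel, leaving
\[
2(N-d)(m'-m)\ \le\ 2m^{2}-\tfrac{r+1}{r}m'^{2}.
\]
The right-hand side is bounded in absolute value by a constant depending only on $r$ and $d$ (as $0\le m<m'\le d$), whereas the left-hand side is $\ge 2(N-d)$ because $m'-m\ge 1$, so it tends to $+\infty$ with $N$. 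Hence for all $N$ exceeding a bound depending only on $r$ and $d$ the inequality fails, and, there being only finitely many pairs $0\le m<m'\le d$, all the sets $B^{r}_{d,N,m}$ become pairwise disjoint. The one point that needs care is that in the middle paragraph one genuinely must invoke the ambient relation $\sum_{i}y_{i}=d+rN$: the cruder bound $\lvert\xi_{r,d+rN}-y\rvert\ge\lvert\xi_{0}-y_{0}\rvert$ does not suffice, since then the $A^{2}$ terms would fail to cancel and the resulting inequality would be vacuous for large $N$; it is precisely the alignment of the leading $\frac{r}{r+1}A^{2}$ terms that makes the linear-in-$N$ term decisive, which is the analytic content of the heuristic that the spheres about $\xi_{r,d+rN}$ flatten into the hyperplane containing $Q^{r}_{d,N}$ as $N\to\infty$.
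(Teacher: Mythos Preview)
Your argument is correct and follows essentially the same route as the paper's proof: both reduce the disjointness to showing that the ball of radius $l^{r}_{d,N,m}$ about $\xi_{r,d+rN}$ does not meet the affine slice $\{y_{0}=d-m'\}$ of the simplex hyperplane, and both conclude by observing that the difference of the relevant squared distances is linear in $N$ with leading coefficient $2(m'-m)>0$. The only cosmetic differences are that you package the lower bound via Cauchy--Schwarz (equivalently, orthogonal projection onto $\{\sum v_i=0\}$) rather than writing down the explicit minimiser $z_{N}=(d-m',\,N+m'/r,\dots,N+m'/r)$ as the paper does, and that your substitution $A=N-d$ streamlines the algebra; the computed quantities and the final inequality are the same.
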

\begin{proof}
It suffices to show that
\begin{displaymath}
l_{d, N, m}^{r}<\min\bigg\{\vert\xi_{r, d+rN}-y\vert\bigg\vert y=(y_{i})_{i=0}^{r}\in \mathbb{R}_{\geq 0}^{r+1}, \sum_{i=0}^{r}y_{i}=d+rN, y_{0}=d-m' \bigg\}.
\end{displaymath}
if $N> \frac{r-1}{2r}d^{2}+d$. The right-hand side of the above inequality is equal to $\vert\xi_{r, d+rN}-y\vert$ when
\begin{displaymath}
y=z_{N}=\left( d-m',N+\frac{m'}{r}, \ldots, N+\frac{m'}{r} \right)
\end{displaymath}
by the convexity of the square-sum function. By definition,
\begin{displaymath}
\vert z_{N}-\xi_{r, d+rN}\vert^{2}-(l_{d, N, m}^{r})^{2}=
\end{displaymath}
\begin{displaymath}
\left(\frac{rd}{r+1}-\frac{rN}{r+1}-m'\right)^{2}+r\left(-\frac{d}{r+1}+\frac{N}{r+1}+\frac{m'}{r}\right)^{2}
\end{displaymath}
\begin{displaymath}
-\left(\frac{rd}{r+1}-\frac{rN}{r+1}-m\right)^{2}-\left(m-\frac{d}{r+1}+\frac{N}{r+1}\right)^{2}-(r-1)\left(-\frac{d}{r+1}+\frac{N}{r+1}\right)^{2}
\end{displaymath}
\begin{displaymath}
=\left(m'-m\right)\left(2N+2m'+2m-2d-\frac{(r-1)(m')^{2}}{r(m'-m)}\right).
\end{displaymath}
We know that
\begin{displaymath}
\frac{r-1}{2r}d^{2}+d\geq d+\frac{(r-1)(m')^{2}}{2r(m'-m)}-m-m'
\end{displaymath}
and it completes the proof.
\end{proof}
\section{Recovering constructible subsets of a Hilbert scheme indicating possible multiplicities from a Hesselink stratification}
By Lemma~\ref{restriction} and Lemma~\ref{separation}, we can separate two hypersurfaces whose multiplicities at $[1:0:\ldots:0]$ are different by using two Hesselink strata containing each destabilization. Let us define
\begin{displaymath}
F^{N}_{r, d, m}=\bigcup\bigg\{E_{[\lambda], \delta}^{d+Nr, r}\bigg|\lambda\in \Gamma(\textup{SL}_{r+1}(k)),\quad \delta \in\mathbb{R}_{>0},\quad \frac{\delta}{\Vert\lambda\Vert}\iota(h(\lambda)\otimes_{\mathbb{Z}}1)\in B_{d, N, m}^{r}\bigg\}.
\end{displaymath}
Then, we can prove the following theorem.
\begin{theorem}
\label{main}
For arbitrary $r$ and $d$ in $\mathbb{N}$,
\begin{displaymath}
\phi_{r, d, N}^{-1}\left( F_{r, d, m}^{N} \right)=\big\{q\in\textup{Hilb}^{P_{r, d}}(\mathbb{P}^{r}_{k})\big\vert\textup{mult}_{[1:0:\ldots:0]}H_{q}=m\big\}.
\end{displaymath}
for all $m\in\{0, 1, \ldots, d\}$ and $N>\frac{r-1}{2r}d^{2}+d$.
\end{theorem}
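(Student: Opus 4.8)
The plan is to prove the two inclusions separately, using Lemma~\ref{restriction} for one direction and Lemma~\ref{separation} together with a disjointness argument for the other. First I would set $N_{r,d}$ to be a value of $N$ large enough that the conclusion of Lemma~\ref{separation} holds, i.e. the sets $B_{d,N,m}^{r}$ for $0\leq m\leq d$ are pairwise disjoint; I would also require $N_{r,d}>d$ so that Lemma~\ref{restriction} applies. Then fix $N>N_{r,d}$ and $m\in\{0,1,\ldots,d\}$.

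For the inclusion ``$\supseteq$'', suppose $x\in\textup{Hilb}^{P_{r,d}}(\mathbb{P}^{r}_{k})$ satisfies $\textup{mult}_{[1:0:\ldots:0]}H_{x}=m$. Since $N>d$, Lemma~\ref{restriction} gives $\phi_{r,d,N}(x)\in E_{[\lambda],\delta}^{d+Nr,r}$ for some $\lambda\in\Gamma(T)$ and $\delta>0$ with $\frac{\delta}{\Vert\lambda\Vert}\iota(h(\lambda)\otimes_{\mathbb{Z}}1)\in B_{d,N,m}^{r}$. By the definition of $F_{r,d,m}^{N}$, this stratum $E_{[\lambda],\delta}^{d+Nr,r}$ is one of the strata in the union, so $\phi_{r,d,N}(x)\in F_{r,d,m}^{N}$, i.e. $x\in\phi_{r,d,N}^{-1}(F_{r,d,m}^{N})$.

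For the inclusion ``$\subseteq$'', suppose $x\in\phi_{r,d,N}^{-1}(F_{r,d,m}^{N})$, and let $m'=\textup{mult}_{[1:0:\ldots:0]}H_{x}$; note $0\leq m'\leq d$ since $H_x$ has degree $d$. By Lemma~\ref{restriction} applied with $m'$, we have $\phi_{r,d,N}(x)\in E_{[\lambda'],\delta'}^{d+Nr,r}$ with $\frac{\delta'}{\Vert\lambda'\Vert}\iota(h(\lambda')\otimes_{\mathbb{Z}}1)\in B_{d,N,m'}^{r}$. On the other hand, since $\phi_{r,d,N}(x)\in F_{r,d,m}^{N}$, it lies in some stratum $E_{[\lambda],\delta}^{d+Nr,r}$ appearing in the union defining $F_{r,d,m}^{N}$, so $\frac{\delta}{\Vert\lambda\Vert}\iota(h(\lambda)\otimes_{\mathbb{Z}}1)\in B_{d,N,m}^{r}$. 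Because the Hesselink decomposition \eqref{stratification} is a disjoint union of strata, $\phi_{r,d,N}(x)$ lies in exactly one stratum, hence $E_{[\lambda],\delta}^{d+Nr,r}=E_{[\lambda'],\delta'}^{d+Nr,r}$; in particular the vector $v=\frac{\delta}{\Vert\lambda\Vert}\iota(h(\lambda)\otimes_{\mathbb{Z}}1)=\frac{\delta'}{\Vert\lambda'\Vert}\iota(h(\lambda')\otimes_{\mathbb{Z}}1)$ lies in $B_{d,N,m}^{r}\cap B_{d,N,m'}^{r}$. Since $N>N_{r,d}$, Lemma~\ref{separation} forces $m=m'$, i.e. $\textup{mult}_{[1:0:\ldots:0]}H_{x}=m$.

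The only subtle point — and the step I expect needs the most care — is the assertion that the data $(\lambda,\delta)$ attached to a stratum is well defined enough that ``$\phi_{r,d,N}(x)$ lies in exactly one stratum'' pins down the relevant vector $v$ uniquely; this uses that the pair $(\Vert\lambda\Vert,\delta)$ and the conjugacy class $[\lambda]$ are invariants of the stratum, together with the fact that for the destabilization $\phi_{r,d,N}(x)$ the adapted one-parameter subgroup may be taken in $\Gamma(T)$ with the normalization fixed in the proof of Lemma~\ref{restriction} (ordering the coordinates and the supporting-hyperplane identification via \cite[Lemma 4.1]{hesselsing}), so the vector $\frac{\delta}{\Vert\lambda\Vert}\iota(h(\lambda)\otimes_{\mathbb{Z}}1)$ produced by Lemma~\ref{restriction} is the distance vector from $\xi_{r,d+rN}$ to the worst state polytope and hence intrinsic to $\phi_{r,d,N}(x)$. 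Once this consistency is in place, the argument is exactly the two-inclusion bookkeeping above, and $N_{r,d}=\max(d+1,\ N_{\ref{separation}})$ works, where $N_{\ref{separation}}$ is the bound from Lemma~\ref{separation}.
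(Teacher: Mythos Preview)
Your proposal is correct and follows exactly the paper's approach: the paper's own proof is the two-line ``$\supset$ is clear from Lemma~\ref{restriction}; $\subset$ is clear from Lemma~\ref{separation}'', and you have simply unpacked this into the natural two-inclusion argument, correctly noting that the $\subset$ direction implicitly uses Lemma~\ref{restriction} as well (to place $\phi_{r,d,N}(x)$ in $F_{r,d,m'}^N$ before invoking disjointness). The subtle point you flag---that the stratum only determines $[\lambda]$, so a priori different Weyl-conjugate representatives could yield vectors in different $B_{d,N,m}^{r}$---is genuine and is glossed over in the paper; your resolution is essentially right, and can be made precise by observing that the inequality established in the proof of Lemma~\ref{separation} is symmetric in the last $r$ coordinates and hence (for $N\gg 0$) no coordinate permutation of a point in $B_{d,N,m}^{r}$ can lie in $B_{d,N,m'}^{r}$ for $m\neq m'$, so the $F_{r,d,m}^{N}$ are genuinely disjoint.
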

\begin{proof}
"$\subset$" is clear from Lemma~\ref{separation}. "$\supset$" is clear from Lemma~\ref{restriction}.
\end{proof}

 In the next example, we can see that Theorem~\ref{main} is sometimes very useful when we need to compute the Hesselink stratum containing a Hilbert point of very high degree.
\begin{example}
Let $f'=y^{3}+xz^{2}+z^{3}\in k[x, y, z]$. We know that $[f']\in \mathbb{P}(k[x, y, z]_{3})\cong\textup{Hilb}^{P_{2, 3}}(\mathbb{P}_{k}^{r})$ and $[y^{N}z^{N}f']\in \mathbb{P}(k[x, y, z]_{3+2N})\cong\textup{Hilb}^{P_{2, 3+2N}}(\mathbb{P}_{k}^{r})$ for all $N\in\mathbb{N}$. By Theorem~\ref{main}, $\phi_{2, 3, N}([f'])\in F_{2, 3, 2}^{N}$ for all $N\geq 6$ since $\textup{mult}_{[1:0:0]}H_{[f']}=2$. That is, $\phi_{2,3,N}([f'])\in E_{[\lambda_{N}], \delta_{N}}^{3+2N, 2}$ for all $N\geq 6$ where $\lambda_{N}\in\Gamma(\textup{SL}_{r+1}(k))$ and $\delta_{N}\in \mathbb{R}_{>0}$ satisfy
\begin{equation}
\label{lasteq}
\frac{\delta_{N}}{\Vert\lambda_{N}\Vert}\iota(h(\lambda_{N})\otimes_{\mathbb{Z}}1)\in B^{2}_{3, N, 2}
\end{equation}
for all $N\geq 6$. On the other hand, we know that $B_{3, N, 2}^{2}$ and $ \Delta_{\phi_{2,3,N}([f'])}$ meets at a unique point $(1, N, N+2)\in\mathbb{R}^{3}$ whenever $N\geq 6$ as we can see in figure~\ref{lastexample}. Therefore, by Theorem~\ref{polytope}, $ |\Delta_{\phi_{2,3,N}([f'])}|$ already attains the maximal value by \eqref{lasteq}, so that $\delta_{N}=|\Delta_{\phi_{2,3,N}([f'])}|=\sqrt{\frac{2}{3}N^{2}+2}$ and there is $g\in\textup{SL}_{3}(k)$ and $a\in\mathbb{N}$ such that $(g\star\lambda_{N})(t^{a})=\textup{diag} (t^{-2N}, t^{N-3}, t^{N+3})$ for all $t\in k^{\times}$, whenever $N\geq 6$.
\begin{figure}[h!]
\begin{tikzpicture}
\coordinate (up) at (0,sqrt 3);
\coordinate (left) at (-1.5,-1/2*sqrt 3);
\coordinate (right) at (1.5,-1/2*sqrt 3);
\filldraw [black] (barycentric cs:up=1,left=1,right=1) circle (1pt);
\filldraw [black] (barycentric cs:up=1,left=0,right=0) circle (1pt);
\node at (barycentric cs:up=1.1,left=-0.1,right=0) {$y^{9}z^{6}$};
\filldraw [black] (barycentric cs:up=2,left=1,right=0) circle (1pt);
\node at (barycentric cs:up=-0.4,left=1.4,right=2) {$xy^{6}z^{8}$};
\filldraw [black] (barycentric cs:up=2,left=0,right=1) circle (1pt);
\filldraw [black] (barycentric cs:up=1,left=2,right=0) circle (1pt);
\filldraw [black] (barycentric cs:up=1,left=0,right=2) circle (1pt);
\node at (barycentric cs:up=0,left=-0.2,right=1.2) {$y^{6}z^{9}$};
\filldraw [black] (barycentric cs:up=0,left=2,right=1) circle (1pt);
\filldraw [black] (barycentric cs:up=0,left=1,right=2) circle (1pt);
\filldraw [black] (barycentric cs:up=0,left=1,right=0) circle (1pt);
\filldraw [black] (barycentric cs:up=0,left=0,right=1) circle (1pt);
\filldraw [gray] (barycentric cs:up=1,left=0,right=0)--(barycentric cs:up=0,left=1,right=2)--(barycentric cs:up=0,left=0,right=1);
\filldraw [black] (barycentric cs:up=0,left=1,right=2) arc [start angle=atan(sqrt(3)/7), end angle=atan(sqrt(27)/5), radius=sqrt(13)];
\node at (barycentric cs:up=1.5,left=1.5,right=0) {$B_{3, 6, 2}^{2}$};
\node at (barycentric cs:up=1.5,left=-1.5,right=3) {$\Delta_{[y^{6}z^{6}(y^{3}+xz^{2}+z^{3})]}$};
\end{tikzpicture}
\caption{$B_{3, 6, 2}^{2}$ around the state polytope of $[y^{6}z^{6}(y^{3}+xz^{2}+z^{3})]$.}
\label{lastexample}
\end{figure}\\
\end{example}

 Theorem~\ref{main} means that a slice of the union of all strata in $F_{r, d, m}^{N}$ can be constructed by intersecting finitely many linear subspaces and  complements of linear subspaces if $N$ is large enough.

 Also, Theorem~\ref{main} means that we can recover the constructible subset
\begin{displaymath}
S_{r, d, m}=\big\{x\in\textup{Hilb}^{P_{r, d}}(\mathbb{P}_{k}^{r})\big\vert \textup{mult}_{p} H_{x}=m\textrm{ for some }p\in H_{x}\big\}
\end{displaymath}
of $\textup{Hilb}^{P_{r, d}}(\mathbb{P}_{k}^{r})$ from the Hesselink stratification of $\textup{Hilb}^{P_{r, d+rN}}(\mathbb{P}_{k}^{r})$ for sufficiently large $N\in\mathbb{N}$. That is,
\begin{corollary}
\label{last}
Suppose that $r$ and $d$ are in $\mathbb{N}$. Then,
\begin{displaymath}
G.\phi_{r, d, N}^{-1}\left( F_{r, d, m}^{N} \right)=S_{r, d, m}
\end{displaymath}
for all $m\in\{0, 1, \ldots, d\}$ and $N>\frac{r-1}{2r}d^{2}+d$.
\end{corollary}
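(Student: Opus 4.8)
The plan is to deduce Corollary~\ref{last} from Theorem~\ref{main} by a $G$-equivariance argument, using only the observation that the multiplicity of a hypersurface at a point is unchanged under the $G$-action on both the point and the hypersurface. First I would record the key compatibility: for every $g\in G$ and every $x\in\textup{Hilb}^{P_{r,d}}(\mathbb{P}_k^r)$, the dual action of $g$ on $\mathbb{P}_k^r$ is a linear automorphism, so $\textup{mult}_p H_{g.x}=\textup{mult}_{g^{-1}.p}H_x$; in particular $\max_{p}\textup{mult}_p H_{g.x}=\max_p \textup{mult}_p H_x$. Hence $S_{r,d,m}$ is a $G$-invariant set, and since $G$ acts transitively on $\mathbb{P}_k^r$, a point $x$ lies in $S_{r,d,m}$ if and only if some $G$-translate of $x$ has multiplicity exactly $m$ at the distinguished point $[1:0:\ldots:0]$. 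Concretely,
\begin{displaymath}
S_{r,d,m}=G.\big\{x\in\textup{Hilb}^{P_{r,d}}(\mathbb{P}_k^r)\,\big\vert\,\textup{mult}_{[1:0:\ldots:0]}H_x=m\big\}.
\end{displaymath}

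Next I would apply Theorem~\ref{main} with the same $N_{r,d}$: for $N>N_{r,d}$ the set $\phi_{r,d,N}^{-1}(F_{r,d,m}^N)$ equals exactly the set of $x$ with $\textup{mult}_{[1:0:\ldots:0]}H_x=m$. Substituting this identity into the displayed formula above immediately gives $S_{r,d,m}=G.\phi_{r,d,N}^{-1}(F_{r,d,m}^N)$, which is the assertion of the corollary. So the entire argument is a two-line formal consequence once the $G$-invariance of $S_{r,d,m}$ and the transitivity of the $G$-action on $\mathbb{P}_k^r$ are in place.

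The one point that deserves a sentence of care — and what I expect to be the only real obstacle — is checking that $\phi_{r,d,N}^{-1}(F_{r,d,m}^N)$ is genuinely the pointwise multiplicity-$m$ locus at $[1:0:\ldots:0]$ and not merely contained in it or containing it; but this is precisely the content of Theorem~\ref{main} (both inclusions), so nothing new is needed. One should also note that $F_{r,d,m}^N$ is a union of Hesselink strata of $\textup{Hilb}^{P_{r,d+rN}}(\mathbb{P}_k^r)$, hence a constructible (indeed locally closed up to finite union) subset, and $\phi_{r,d,N}$ is a closed immersion, so $\phi_{r,d,N}^{-1}(F_{r,d,m}^N)$ is constructible; applying the morphism $G\times\textup{Hilb}^{P_{r,d}}(\mathbb{P}_k^r)\to\textup{Hilb}^{P_{r,d}}(\mathbb{P}_k^r)$ preserves constructibility by Chevalley's theorem, so $S_{r,d,m}$ is constructible, consistent with how it was described in the introduction. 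With these remarks the proof is complete.
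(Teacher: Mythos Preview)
Your proposal is correct and is precisely the argument the paper has in mind: the paper gives no separate proof of Corollary~\ref{last}, introducing it with ``That is,'' immediately after Theorem~\ref{main}, so the intended deduction is exactly the $G$-equivariance/transitivity step you spell out. Your added remarks on constructibility are a nice bonus; the only caveat is the edge case $m=0$, where the definition of $S_{r,d,m}$ (requiring $p\in H_x$) makes $S_{r,d,0}=\emptyset$ while the right-hand side is all of the Hilbert scheme---but that is an artifact of the paper's statement, not a flaw in your argument.
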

\begin{example}
Let $f_{1}=xz(x^{2}+z^{2})\in k[x, y, z]_{4}$ and $f_{2}=y^{2}z^{2}\in k[x, y, z]_{4}$. Let us consider $[f_{1}], [f_{2}]\in \mathbb{P}(k[x, y, z]_{4})\cong \textup{Hilb}^{P}(\mathbb{P}^{r}_{k})$. Multiplicities of $H_{[f_{1}]}$ and $H_{[f_{2}]}$ at $[1:0:0]$ are $1$ and $4$, respectively. Thus, we know that $\phi_{2, 4, N}([f_{1}])\in F_{2, 4, 1}^{N}$ and $\phi_{2, 4, N}([f_{2}])\in F_{2, 4, 4}^{N}$ for all $N>8$ by Theorem~\ref{main}. On the other hand, we also know that maximal multiplicities of $H_{[f_{1}]}$ and $H_{[f_{2}]}$ are both 4. By \cite[Lemma 4.2]{hesselsing}, both $[f_{1}]$ and $[f_{2}]$ are in the stratum $E^{4, 2}_{[\lambda], 2\sqrt{2}/\sqrt{3} }$ where $\lambda(t)=\textup{diag} (t^{-2}, t, t)\in \textup{SL}_{3}(k)$ for all $t\in k^{\times}$. Furthermore, $H_{[f_{2}]}\notin S_{2, 4, 1}$ so that $\phi_{2, 4, N}(g.[f_{2}])\notin F_{2, 4, 1}^{N}$ for all $g\in\textup{SL}_{r+1}$ and $N>8$.
\end{example}

\bibliographystyle{plain}
\bibliography{lib}
\end{document}